\definecolor{ColBlack}{RGB}{0,0,0} 
\definecolor{ColWhite}{RGB}{255,255,255} 
\definecolor{ColA}{RGB}{171,57,39} 
\definecolor{ColB}{RGB}{44,171,70} 
\newtheorem{theorem}{Theorem}[section]
\newtheorem{proposition}[theorem]{Proposition}
\newtheorem{lemma}[theorem]{Lemma}
\theoremstyle{definition}
\newtheorem{definition}{Definition}
\numberwithin{equation}{section}
\renewcommand{\leq}{\leqslant}
\renewcommand{\geq}{\geqslant}
\title{Graph insertion operads}
\author{
    J.-C. Aval%
    \thanks{\href{mailto:aval@labri.fr}
        {\tt aval@labri.fr}}%
    \addressmark{1},
    S. Giraudo%
    \thanks{\href{mailto:samuele.giraudo@u-pem.fr}
        {\tt samuele.giraudo@u-pem.fr}}%
    \addressmark{2},
    T. Karaboghossian%
    \thanks{\href{mailto:theo.karaboghossian@u-bordeaux.fr}
        {\tt theo.karaboghossian@u-bordeaux.fr}}%
    \addressmark{1},
    \and
    A. Tanasa
    \thanks{\href{mailto:ntanasa@u-bordeaux.fr}
        {\tt ntanasa@u-bordeaux.fr}}%
    \addressmark{1}\addressmark{3}
}
\address{
    \addressmark{1} LaBRI (UMR CNRS $5800$), Univ. Bordeaux, $33405$ Talence, France.
    \\
    \addressmark{2} LIGM, Univ. Gustave Eiffel, CNRS, ESIEE Paris, F-$77454$
    Marne-la-Vallée, France.
    \\
    \addressmark{3} IUF Paris, France and H. Hulubei Nat. Inst. Phys. Nucl. Engineering
    Magurele, Romania.
}
\abstract{Using the combinatorial species setting, we propose two new operad structures on
multigraphs and on pointed oriented multigraphs. The former can be considered as a canonical
operad on multigraphs, directly generalizing the Kontsevich-Willwacher operad, and has many
interesting suboperads. The latter is a natural extension of the pre-Lie operad in a sense
developed here and related to the multigraph operad. We study some of the finitely generated
suboperads of the multigraph operad and establish links between them and the commutative
operad and the commutative magmatic operad.}
\keywords{Graphs; Species; Operads; Pre-Lie Operad; Koszul duality.}
\tikzstyle{Centering}=[{baseline={([yshift=-0.5ex]current bounding box.center)}}]
\tikzstyle{NodeGraph}=[circle,draw=ColA!90,fill=ColA!10,inner sep=1pt,minimum size=4mm,
\tikzstyle{UnlabeledNodeGraph}=[NodeGraph,minimum size=2mm]
\tikzstyle{RootGraph}=[NodeGraph,rectangle]
\tikzstyle{EdgeGraph}=[ColB!70,cap=round,very thick]
\tikzstyle{ArcGraph}=[EdgeGraph,->]
\newcommand{\Hide}[1]{\tt HIDEN}
\newcommand{\OEIS}[1]{\href{http://oeis.org/#1}{{\bf #1}}}
\newcommand{\K}{\mathbb{K}}
\newcommand{\Identity}{\mathcal{I}}
\newcommand{\Operad}{\mathcal{O}}
\newcommand{\Ope}{\mathrm{Ope}}
\newcommand{\FreeOp}{\mathbf{Free}}
\newcommand{\Com}{\mathbf{Com}}
\newcommand{\PLie}{\mathbf{PLie}}
\newcommand{\NAP}{\mathbf{NAP}}
\newcommand{\ComMag}{\mathbf{ComMag}}
\newcommand{\MG}{\mathbf{MG}}
\newcommand{\G}{\mathbf{G}}
\newcommand{\T}{\mathbf{T}}
\newcommand{\ST}{\mathbf{ST}}
\newcommand{\SP}{\mathbf{SP}}
\newcommand{\LP}{\mathbf{LP}}
\newcommand{\Points}[2]{
    \begin{tikzpicture}[Centering,scale=.6]
        \node[NodeGraph](a)at(0,0){$#1$};
        \node[NodeGraph](b)at(1,0){$#2$};
    \end{tikzpicture}}
\newcommand{\Segment}[2]{
    \begin{tikzpicture}[Centering,scale=.6]
        \node[NodeGraph](a)at(0,0){$#1$};
        \node[NodeGraph](b)at(1,0){$#2$};
        \draw[EdgeGraph](a)--(b);
    \end{tikzpicture}}
\begin{document}
\maketitle

\section*{Introduction}
Operads are mathematical structures which have been intensively studied in the context of
topology, algebra \cite{LV12} but also of combinatorics~\cite{CSLC} ---see for
example~\cite{Men15, Gir18} for general references on symmetric and non-symmetric operads,
set-operads through species, {\em etc.} In the last decades, several interesting operads on
trees have been defined. Amongst these tree operads, maybe the most studied are the pre-Lie
operad $\PLie$~\cite{CL01} and the nonassociative permutative operad $\NAP$~\cite{Liv06}.

However, it seems to us that a natural question to ask is what kind of operads can be
defined on graphs and what are their properties? The need for defining appropriate graph
operads comes from combinatorics, where graphs are, just like trees, natural objects to
study, but also from physics, where it was recently proposed to use graph operads in order
to encode the combinatorics of the renormalization of Feyman graphs in quantum field
theory~\cite{Kreimer:2000ja}.

Other graph operads have been defined for example in~\cite{Kon99,Wil05,Men15,Gir17,MV19}.
In this paper, we go further in this direction and we define, using the combinatorial
species~\cite{BLL98} setting, new graph operads. Moreover, we investigate several properties
of these operads: we describe an explicit link with the pre-Lie tree operad mentioned above,
and we study interesting (finitely generated) suboperads.

This paper is organized as follows. Section~\ref{sec:preliminaries} contains elementary
definitions of species and operads. In Section~\ref{sec:graph_operads} we define and study
the main operads of interest of this paper. Section~\ref{sec:suboperads} is devoted to the
study of finitely generated suboperads.

This text is an extended abstract. The long version of this work~\cite{AGKT19} contains a
more general definition of graph insertion operads as well as new operad constructions and
all the proofs of the results presented here.

\section{Species, operads and graphs} \label{sec:preliminaries}
Most definitions, results and proofs of this section can be found with more details
in~\cite{Men15}. We refer the reader to~\cite{BLL98} for the theory of species and
to~\cite{LV12} for the theory of operads. In all the following, $\K$ is a field of
characteristic zero. For any positive integer $n$, $[n]$ stands for the set $\{1,\dots,n\}$.

\begin{definition}
A \textit{set species} $S$ consists of the following data. For each finite set $V$ a set
$S[V]$, and for each bijection of finite sets $\sigma: V\rightarrow V'$ a map
$S[\sigma]:S[V]\rightarrow S[V']$.  These maps should be such that $S[\sigma\circ\tau] =
S[\sigma]\circ S[\tau]$ and $S[\Identity] = \Identity$, where $\Identity$ is the identity
map.

A \textit{morphism of set species} $f: R\rightarrow S$ is a collection of maps $f_V : R[V]
\rightarrow S[V]$ such that for each bijection $\sigma: V\rightarrow V'$, $f_{V'}\circ
R[\sigma] = S[\sigma]\circ f_V$.  A set species $S$ is \textit{positive} if $|S[\emptyset]|
= 0$ and \textit{connected} if $|S[\{v\}]| = 1$ for any singleton $\{v\}$.
\end{definition}

Switching sets with vector spaces, maps to linear maps and cardinality to dimension in the
previous definition, we obtain the definitions of \textit{linear species}, \textit{morphisms
of linear species}, \textit{positive linear species}, and \textit{connected linear species}.
The \textit{Hilbert series} of a linear species $S$ is the formal series $\mathcal{H}_S(t) =
\sum_{n \geq 0} \dim S[[n]] \frac{x^n}{n!}$.  For $S$ a set species, we denote by $\K S$ the
linear species defined by  $(\K S)[V] = \K S[V]$, where $\K S[V]$ is the $\K$-linear span of
$S[V]$. The linear space $\K S[V]$ is naturally equipped with a scalar product
$(-|-)_{S[V]}$ by setting that $S[V]$ is an orthonormal basis. The \textit{support} of $x
\in \K S[V]$ is the set $\{y\in S[V] : (x| y)_{S[V]} > 0\}$.

In all the following, $V$ always denotes a finite set.  Let $R$ and $S$ be two linear
species. We recall the classical constructions on species: $(R+S)[V] = R[V]\oplus S[V]$
(\textit{sum}), $(R \cdot S)[V] = \bigoplus_{V_1\sqcup V_2 = V} R[V_1]\otimes S[V_2]$
(\textit{product}), $(R\times S)[V] = R[V]\otimes S[V]$ (\textit{Hadamard product}), $R'[V]
= R[V \sqcup\{\ast\}]$ (\textit{derivative}), $R^{\bullet}[V] = R[V]\times V$
(\textit{pointing}), and $E(R)[V]= \bigoplus_{\cong}\bigotimes_{W\in V/\cong} R[W]$
(\textit{assembly}) where $\cong$ run over the set of the equivalence relations on $V$.
These definitions are compatible with the functor $S\mapsto \K S$, {\em e.g.}, $\K(R+S) = \K
R \oplus \K S$.

Let $X$ be the set species defined by $X[\{v\}] = \{v\}$ and $X[V] = \emptyset$ if $V$ is
not a singleton.

\begin{definition} \label{def:operads}
A \textit{(symmetric) set} (resp. \textit{linear}) \textit{operad} is a positive set (resp.
linear) species $\Operad$ together with a \textit{unit} $e: X\rightarrow \Operad$ (resp. $e
: \K X \rightarrow \Operad$) and a \textit{partial composition map}
$\circ_{\ast}:\Operad'\cdot\Operad\rightarrow \Operad$, such that the following three
diagrams commute

\begin{minipage}{.26\textwidth}
\begin{equation} \footnotesize
    \begin{tikzcd}
        \Operad'' \cdot \Operad^2 \arrow[r, "\circ_{\ast_1}"]
        \arrow[d,"\circ_{\ast_2}\circ\Identity\cdot\tau"]
        & \Operad'\cdot\Operad \arrow[d, "\circ_{\ast_2}"]
        \\
        \Operad'\cdot \Operad \arrow[r, "\circ_{\ast_1}"] & \Operad
    \end{tikzcd}
\end{equation}
\end{minipage}
\begin{minipage}{.28\textwidth}
\begin{equation} \footnotesize
    \begin{tikzcd}
        \Operad'\cdot \Operad'\cdot\Operad \arrow[r, "\circ_{\ast_1}\cdot\Identity"]
        \arrow[d, "\Identity\cdot\circ_{\ast_2}"] & \Operad'\cdot\Operad
        \arrow[d, "\circ_{\ast_2}"]
        \\
        \Operad'\cdot \Operad \arrow[r, "\circ_{\ast_1}"] & \Operad
    \end{tikzcd}
\end{equation}
\end{minipage}
\begin{minipage}{.36\textwidth}
\begin{equation} \footnotesize
    \begin{tikzcd}
        \Operad'\cdot \K X \arrow[r, "\Operad'\cdot e"] \arrow[rd, "p"]
        & \Operad'\cdot\Operad \arrow[d, "\circ_{\ast}"] & \K X'\cdot \Operad
        \arrow[l, "e'\cdot \Operad" swap]\arrow[dl, "\cong" swap]
        \\
        & \Operad 
    \end{tikzcd}
\end{equation}
\end{minipage}

\noindent where $\tau_V:x\otimes y\in \Operad^2[V] \mapsto y\otimes x\in \Operad^2[V]$, and
$p_V: x\otimes v \mapsto \Operad[\ast\mapsto v](x)$ with $\ast\mapsto v$ the bijection that
sends $\ast$ on $v$ and is the identity on $V\setminus\{v\}$.

An \textit{operad morphism} is a species morphism compatible with the units and the
partial composition maps.
\end{definition}

Remark that if $(S,e,\circ_{\ast})$ is a set-operad, then extending $e$ and $\circ_{\ast}$
linearly makes $(\K S,e,\circ_{\ast})$ a linear operad. In all the following, $e$ will often
be trivial and we will not mention it. From now on all the considered species will be
positive. Except for the set operad $\ComMag$ (see Section~\ref{sec:suboperads}), we will
only consider linear operad, hence we will write species and operad for linear species and
linear opeard.

An \textit{ideal} of an operad $\Operad$ is a subspecies $S$ such that the image of the
products $\Operad'\cdot S$ and $S'\cdot\Operad$ by the partial composition maps are in $S$.
The \textit{quotient species} $\Operad/ S$ defined by $(\Operad/S)[V] = \Operad[V]/S[V]$ is
then an operad with the natural partial composition and unit.

We now need to recall the notion of free operad. For this we first introduce some notations.
For $V$ a set, let $\mathcal{T}$ be the species of \textit{trees} defined as follows. For
any set $V$, $\mathcal{T}[V]$ is the set such that
\begin{itemize}
    \item if $V=\{v\}$ is a singleton, then the sole element of $\mathcal{T}[V]$ is the tree
    reduced to a leaf labelled by $\{v\}$.
    \item Otherwise, let $\pi =(\pi_1,\dots,\pi_k)$ be a partition of $V$ and
    $t_1,\dots,t_k$ be respectively elements of $\mathcal{T}[\pi_1]$, \dots,
    $\mathcal{T}[\pi_k]$. Then the tree consisting in an internal node having from left to
    right $t_1$, \dots, $t_k$ as sub-trees is an element of $\mathcal{T}[V]$.
\end{itemize}

Let now $G$ be a positive species. The \textit{free operad} $\FreeOp_G$ over $G$ is defined
as follows. As a species, $\FreeOp_G$ is such that for any set $V$, $\FreeOp_G[V]$ is the
set of labeled versions of the trees of $\mathcal{T}[V]$: any internal nodes having $k$
children of a tree is labeled by an element of $G[[k]]$.  The partial composition of
$\FreeOp_G$, denoted by $\circ^{\xi}$ is the grafting of trees: for any disjoint sets $V_1$
and $V_2$ with $\ast\in V_1$, and $t_1 \in \FreeOp_G[V_1]$ and $t_2 \in \FreeOp_G[V_2]$,
$t_1\circ_{\ast}^{\xi} t_2$ is the tree obtained by grafting $t_2$ on the leaf $\ast$ of
$t_1$. Moreover, for any $k \geq 0$, we denote by $\FreeOp_G^{(k)}$ the subspecies of
$\FreeOp_G$ of trees with $k$ exactly internal nodes.

If $R$ is a subspecies of $\FreeOp_G$, we denote by $(R)$ the smallest ideal containing $R$
and write that $(R)$ is \textit{generated by $R$}.

For any species $S$ we denote by $S^{\vee}$ the species defined by $S^{\vee}[V] = S[V]^*$
and $S^{\vee}[\sigma](x) = \text{sign}(\sigma)x\circ S[\sigma^{-1}]$.

\begin{definition}
Let $G$ be a positive species and $R$ be a subspecies of $\FreeOp_G$. Let $\Ope(G,R) =
\FreeOp_G/(R)$. The operad $\Ope(G,R)$ is \textit{binary} if the species $G$ of generators
is concentrated in cardinality $2$ ({\em i.e.}, for all $n \ne 2$, $G[[n]] = \{0\}$). This
operad is \textit{quadratic} if $R$ is a subspecies of $\FreeOp_G^{(2)}$.
\end{definition}

\begin{definition}
Let $\Operad =\Ope(G,R)$ be a binary quadratic operad. Let us define the linear form
$\langle -, -\rangle$ on $\FreeOp_{G^{\vee}}^{(2)}\times\FreeOp_{G}^{(2)}$ as follows.  For
$V=\{a,b,c\}, f_1\in {G^{\vee}}'[\{a\}], f_2\in G[\{b,c\}], x_1\in G'[\{a\}]$, and $x_2\in
G[\{b,c\}]$:
\begin{equation}
    \langle f_1\circ_{\ast} f_2 , x_1\circ_{\ast} x_2 \rangle = f_1(x_1)f_2(x_2).
\end{equation}
The \textit{Koszul dual} of $\Operad$ is then the operad $\Operad^!=\Ope(G^{\vee},R^{\bot})$
where $R^{\bot}$ is the orthogonal of $R$ for $\langle -,-\rangle$.
\end{definition}

When $\Operad$ is quadratic and its Koszul complex is acyclic~\cite{LV12}, $\Operad$ is
a \textit{Koszul operad}. In this case, the Hilbert series of $\Operad$ and of its Koszul
dual are related by the identity
\begin{equation}
    \mathcal{H}_{\Operad}(-\mathcal{H}_{\Operad^!}(-t)) = t.
\end{equation}

\section{Graph operads} \label{sec:graph_operads}
A \textit{multigraph} on $V$ is a multiset of unordered pairs in $V^2$ which we call
\textit{edges}. In this context, the elements of $V$ are called vertices and the elements in
$V$ which are in no edge are called \textit{isolated vertices}. A multigraph on $V$ is
\textit{connected} if for every vertices $v$ and $v'$, there is a sequence of edges
$e_1,\dots,e_k$ such that $v\in e_1$, $v'\in e_k$ and $e_i\cap e_{i+1}$ for every $1\leq
i<k$.  A \textit{graph} on $V$ is a multigraph on $V$ which is a set and with no edge in
$\{\{v,v\} : v\in V\}$. We denote by $\MG$ the set species of multigraphs, by $\G$ its set
subspecies of graphs, and by $\MG_c$ and $\G_c$ their connected counterparts. We finally
denote by $\T$ the set subspecies of $\G_c$ restricted to trees.

Let $V_1$ and $V_2$ be two disjoint sets such that $\ast \in V_1$.  For any multigraphs $g_1
\in \MG[V_1]$ and $g_2 \in \MG[V_2]$, the \textit{insertion} of $g_2$ into $g_1$ is the sum
of all the multigraphs of $\MG[V_1 \setminus \{\ast\} \sqcup V_2]$ obtained by the following
process:
\begin{enumerate}
    \item Do the disjoint union of $g_1$ and $g_2$;
    \item Remove the vertex $\ast$. We then have some edges with one (or two if $\ast$ has
    loops) loose end(s);
    \item Connect each loose end to any vertex in $V_2$.
\end{enumerate}
For instance,
\begin{equation}\begin{split}
    \begin{tikzpicture}[Centering,scale=1]
        \tikzset{every loop/.style={}}
        \node[NodeGraph](a)at(0,0){$a$};
        \node[NodeGraph](s)at(1,0){$\ast$};
        \draw[EdgeGraph](a)edge[bend left=40](s);
        \draw[EdgeGraph](a)edge[bend right=40](s);
        \draw[EdgeGraph](s)edge[loop above](s);
        \draw[EdgeGraph,draw=ColWhite](s)edge[loop below](s);
    \end{tikzpicture}
    \enspace \circ_\ast \enspace
    \begin{tikzpicture}[Centering,scale=1]
        \tikzset{every loop/.style={}}
        \node[NodeGraph](b)at(0,0){$b$};
        \node[NodeGraph](c)at(1,0){$c$};
        \draw[EdgeGraph](b)--(c);
        \draw[EdgeGraph](c)edge[loop above](c);
        \draw[EdgeGraph,draw=ColWhite](s)edge[loop below](s);
    \end{tikzpicture}
    & \enspace = \enspace
    \begin{tikzpicture}[Centering,scale=1]
        \tikzset{every loop/.style={}}
        \node[NodeGraph](a)at(0,0){$a$};
        \node[NodeGraph](b)at(1,0){$b$};
        \node[NodeGraph](c)at(2,0){$c$}; 
        \draw[EdgeGraph](a)edge[bend left=40](b);
        \draw[EdgeGraph](a)edge[bend right=40](b);
        \draw[EdgeGraph](b)edge[loop below](b);
        \draw[EdgeGraph](b)--(c);
        \draw[EdgeGraph](c)edge[loop above](c);
    \end{tikzpicture}
    \enspace + \enspace
    \begin{tikzpicture}[Centering,scale=1]
        \tikzset{every loop/.style={}}
        \node[NodeGraph](a)at(0,0){$a$};
        \node[NodeGraph](b)at(1,0){$b$};
        \node[NodeGraph](c)at(2,0){$c$};
        \draw[EdgeGraph](a)edge[bend left=40](b);
        \draw[EdgeGraph](a)edge[bend right=40](b);
        \draw[EdgeGraph](c)edge[loop below](c);
        \draw[EdgeGraph](b)--(c);
        \draw[EdgeGraph](c)edge[loop above](c);
    \end{tikzpicture}
    \enspace + \enspace
    2\,
    \begin{tikzpicture}[Centering,scale=1]
        \tikzset{every loop/.style={}}
        \node[NodeGraph](a)at(0,0){$a$};
        \node[NodeGraph](b)at(1,0){$b$};
        \node[NodeGraph](c)at(2,0){$c$};
        \draw[EdgeGraph](a)edge[bend left=40](b);
        \draw[EdgeGraph](a)edge[bend right=40](b);
        \draw[EdgeGraph](b)edge[bend left=40](c);
        \draw[EdgeGraph](b)edge[bend right=40](c);
        \draw[EdgeGraph](c)edge[loop above](c);
        \draw[EdgeGraph,draw=ColWhite](c)edge[loop below](c);
    \end{tikzpicture}
    \\
    & \quad + \enspace
    2\,
    \begin{tikzpicture}[Centering,scale=1]
        \tikzset{every loop/.style={}}
        \node[NodeGraph](a)at(0,0){$a$};
        \node[NodeGraph](b)at(1,0){$b$};
        \node[NodeGraph](c)at(2,0){$c$};
        \draw[EdgeGraph](a)--(b);
        \draw[EdgeGraph](a)edge[bend right=40](c);
        \draw[EdgeGraph](b)edge[loop above](b);
        \draw[EdgeGraph](b)--(c);
        \draw[EdgeGraph](c)edge[loop above](c);
    \end{tikzpicture}
    \enspace + \enspace
    2\,
    \begin{tikzpicture}[Centering,scale=1]
        \tikzset{every loop/.style={}}
        \node[NodeGraph](a)at(0,0){$a$};
        \node[NodeGraph](b)at(1,0){$b$};
        \node[NodeGraph](c)at(2,0){$c$};
        \draw[EdgeGraph](a)--(b);
        \draw[EdgeGraph](a)edge[bend right=40](c);
        \draw[EdgeGraph](c)edge[loop below](c);
        \draw[EdgeGraph](b)--(c);
        \draw[EdgeGraph](c)edge[loop above](c);
    \end{tikzpicture}
    \enspace + \enspace
    4\,
    \begin{tikzpicture}[Centering,scale=1]
        \tikzset{every loop/.style={}}
        \node[NodeGraph](a)at(0,0){$a$};
        \node[NodeGraph](b)at(1,0){$b$};
        \node[NodeGraph](c)at(2,0){$c$};
        \draw[EdgeGraph](a)--(b);
        \draw[EdgeGraph](a)edge[bend right=40](c);
        \draw[EdgeGraph](b)edge[bend right=40](c);
        \draw[EdgeGraph](b)edge[bend left=40](c);
        \draw[EdgeGraph](c)edge[loop above](c);
    \end{tikzpicture}
    \\
    & \quad + \enspace
    \begin{tikzpicture}[Centering,scale=1]
        \tikzset{every loop/.style={}}
        \node[NodeGraph](a)at(0,0){$a$};
        \node[NodeGraph](b)at(1,0){$b$};
        \node[NodeGraph](c)at(2,0){$c$};
        \draw[EdgeGraph](a)edge[bend right=40](c);
        \draw[EdgeGraph](a)edge[bend left=40](c);
        \draw[EdgeGraph](b)edge[loop left](b);
        \draw[EdgeGraph](b)--(c);
        \draw[EdgeGraph](c)edge[loop above](c);
    \end{tikzpicture}
    \enspace + \enspace
    \begin{tikzpicture}[Centering,scale=1]
        \tikzset{every loop/.style={}}
        \node[NodeGraph](a)at(0,0){$a$};
        \node[NodeGraph](b)at(1,0){$b$};
        \node[NodeGraph](c)at(2,0){$c$};
        \draw[EdgeGraph](a)edge[bend right=40](c);
        \draw[EdgeGraph](a)edge[bend left=40](c);
        \draw[EdgeGraph](c)edge[loop below](c);
        \draw[EdgeGraph](b)--(c);
        \draw[EdgeGraph](c)edge[loop above](c);
    \end{tikzpicture}
    \enspace + \enspace
    2\,
    \begin{tikzpicture}[Centering,scale=1]
        \tikzset{every loop/.style={}}
        \node[NodeGraph](a)at(0,0){$a$};
        \node[NodeGraph](b)at(1,0){$b$};
        \node[NodeGraph](c)at(2,0){$c$};
        \draw[EdgeGraph](a)edge[bend right=40](c);
        \draw[EdgeGraph](a)edge[bend left=40](c);
        \draw[EdgeGraph](b)edge[bend left=20](c);
        \draw[EdgeGraph](b)edge[bend right=20](c);
        \draw[EdgeGraph](c)edge[loop above](c);
    \end{tikzpicture}.
\end{split}\end{equation}

\begin{theorem}
    The species $\K \MG$, endowed with the insertion as partial composition, is an operad.
\end{theorem}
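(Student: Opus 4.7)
The plan is to exhibit the unit and then verify the three diagrams of Definition~\ref{def:operads} by expanding both sides of each as sums of multigraphs indexed by distributions of loose ends. Naturality of the insertion under bijections of vertex sets is immediate from the construction, so $\K\MG$ is automatically a species morphism, and the argument reduces to checking the associativity and unit axioms.

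The unit $e : \K X \to \K\MG$ sends the basis element of $X[\{v\}]$ to the graph $\bullet_v$ on the single vertex $v$ with no edges. The unit diagram commutes because $\bullet_\ast \circ_\ast g = g$ (removing $\ast$ from $\bullet_\ast$ produces no loose ends) and every loose end produced by $g \circ_v \bullet_v$ must reconnect to the unique vertex $v$, so up to the canonical relabeling $p$ the result is $g$ itself.

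For the parallel associativity (first diagram), I would fix $g \in \MG[V_1]$ with two distinguished vertices $\ast_1, \ast_2 \in V_1$ and $h_1 \in \MG[V_2]$, $h_2 \in \MG[V_3]$ on pairwise disjoint sets, and show that both $(g \circ_{\ast_1} h_1)\circ_{\ast_2} h_2$ and $(g \circ_{\ast_2} h_2)\circ_{\ast_1} h_1$ expand as the sum over all pairs $(\phi_1, \phi_2)$, where $\phi_i$ assigns to each loose end produced by removing $\ast_i$ from $g$ a target vertex in $V_{i+1}$, of the multigraph obtained from $g \sqcup h_1 \sqcup h_2$ by deleting $\ast_1, \ast_2$ and reconnecting accordingly. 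Since the two families of loose ends are independent, the two compositions yield the same sum.

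For the sequential associativity (second diagram), take $g \in \MG[V_1]$ with vertex $\ast_1$, $h \in \MG[V_2]$ with vertex $\ast_2$, and $k \in \MG[V_3]$. On one side, $(g \circ_{\ast_1} h) \circ_{\ast_2} k$ first sums over functions $\phi_1$ from the loose ends at $\ast_1$ to $V_2$, and then, after removing $\ast_2$, sums over functions $\phi_2$ from the resulting loose ends at $\ast_2$ (both those from edges of $h$ and those reconnected there by $\phi_1$) to $V_3$. On the other, $g \circ_{\ast_1} (h \circ_{\ast_2} k)$ first sums over $\phi_2$ from the loose ends of $h$ at $\ast_2$ to $V_3$, and then over $\phi_1'$ from the loose ends of $g$ at $\ast_1$ to $(V_2 \setminus \{\ast_2\}) \sqcup V_3$. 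The bijection $(\phi_1, \phi_2) \leftrightarrow (\phi_1', \phi_2)$ that extends $\phi_2$ to the values of $\phi_1'$ landing in $V_3$ identifies the two sums term by term.

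The main obstacle is tracking multiplicities when $\ast_1$ or $\ast_2$ carries parallel edges or loops: deleting a loop produces two indistinguishable loose ends and parallel edges produce repeated loose ends, so the bijections above must be phrased in terms of the underlying multiset of edge half-endpoints rather than a chosen enumeration, so that the coefficients of each multigraph in both expansions agree.
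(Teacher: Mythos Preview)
The paper is an extended abstract and does not contain a proof of this theorem; all proofs are deferred to the long version~\cite{AGKT19}. Your direct verification of the unit and the two associativity diagrams is the natural approach and is correct in outline; it is almost certainly what the long version does as well.

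One clarification on your final paragraph: the cleanest way to handle multiplicities is not to treat the loose ends as an \emph{indistinguishable} multiset but rather to fix an arbitrary enumeration of the half-edges incident to~$\ast$ (equivalently, to pass to the half-edge model of multigraphs). With such a choice the loose ends become genuinely distinguishable, your bijections $(\phi_1,\phi_2)\leftrightarrow(\phi_1',\phi_2)$ are honest bijections between sets of functions, and the resulting multigraph (after forgetting the enumeration) is independent of the enumeration chosen. The worked example in the paper confirms this reading: the loop at~$\ast$ contributes two \emph{distinguishable} loose ends, which is why reconnecting one to~$b$ and the other to~$c$ yields the coefficient~$2$ on the third term, and the total count with multiplicity is $2^4=16$. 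So the obstacle you flag is real, but the resolution is to make half-edges distinguishable rather than to reason about multisets of endpoints.
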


We call $\K \MG$ the {\em graph insertion operad}. It is straightforward to observe that the
species $\K \G$ and $\K \MG_c$ are suboperads of $\K \MG$, that $\K \G_c$ a suboperad of $\K
\G$, and that $\K \T$ is a suboperad of $\K \G_c$. In particular, this structure on $\K \G$
is known as the Kontsevich-Willwacher operad~\cite{MV19}. For $\K \G$, the insertion
reformulates more formally as follows. For any $g_1 \in \G[V_1]$ and $g_2 \in \G[V_2]$ such
that $V_1$ and $V_2$ are two disjoint sets and $\ast \in V_1$,
\begin{equation}
    g_1 \circ_{\ast} g_2 =
    \sum_{f : N_\ast \to V_2}
    \left(g_1 \setminus \{\ast\}\right) \cup g_2 \cup \bigcup_{v\in N_\ast} \{v,f(v)\},
\end{equation}
where $N_\ast$ is the set of neighbours of $\ast$ in $g_1$. For instance,
\begin{equation}
    \begin{tikzpicture}[Centering,scale=.5]
        \node[NodeGraph](a)at(1,1){$a$};
        \node[NodeGraph](s)at(0,0){$\ast$};
        \node[NodeGraph](b)at(1,-1){$b$};
        \draw[EdgeGraph](a)--(s);
        \draw[EdgeGraph](s)--(b);
    \end{tikzpicture}
    \enspace \circ_\ast \enspace
    \begin{tikzpicture}[Centering,scale=.7]
        \node[NodeGraph](c)at(0,0){$c$};
        \node[NodeGraph](d)at(1,0){$d$};
        \draw[EdgeGraph](c)--(d);
    \end{tikzpicture}
    \enspace = \enspace
    \begin{tikzpicture}[Centering,scale=.5]
        \node[NodeGraph](a)at(1,1){$a$};
        \node[NodeGraph](b)at(1,-1){$b$};
        \node[NodeGraph](c)at(0,0){$c$};
        \node[NodeGraph](d)at(2,0){$d$};
        \draw[EdgeGraph](c)--(a);
        \draw[EdgeGraph](c)--(d);
        \draw[EdgeGraph](c)--(b);
    \end{tikzpicture}
    \enspace + \enspace
    \begin{tikzpicture}[Centering,scale=.5]
        \node[NodeGraph](a)at(1,1){$a$};
        \node[NodeGraph](b)at(1,-1){$b$};
        \node[NodeGraph](c)at(0,0){$c$};
        \node[NodeGraph](d)at(2,0){$d$};
        \draw[EdgeGraph](c)--(d);
        \draw[EdgeGraph](d)--(a);
        \draw[EdgeGraph](c)--(b);
    \end{tikzpicture}
    \enspace + \enspace
    \begin{tikzpicture}[Centering,scale=.5]
        \node[NodeGraph](a)at(1,1){$a$};
        \node[NodeGraph](b)at(1,-1){$b$};
        \node[NodeGraph](c)at(0,0){$c$};
        \node[NodeGraph](d)at(2,0){$d$};
        \draw[EdgeGraph](c)--(d);
        \draw[EdgeGraph](c)--(a);
        \draw[EdgeGraph](d)--(b);
    \end{tikzpicture}
    \enspace + \enspace
    \begin{tikzpicture}[Centering,scale=.5]
        \node[NodeGraph](a)at(1,1){$a$};
        \node[NodeGraph](b)at(1,-1){$b$};
        \node[NodeGraph](c)at(0,0){$c$};
        \node[NodeGraph](d)at(2,0){$d$};
        \draw[EdgeGraph](c)--(d);
        \draw[EdgeGraph](d)--(a);
        \draw[EdgeGraph](d)--(b);
    \end{tikzpicture}.
\end{equation}
It is easy to observe that all graphs appearing in $g_1 \circ_\ast g_2$ have $1$ as
coefficient.

While $\K \MG$ has an involved structure we will see that it has many interesting
suboperads. Let us start by giving some basic results on $\K \G$.

Let $S$ be a species, $I$ be a set, $\{V_i\}_{i\in I}$ be a family of finite sets, and
$x_i\in S[V_i]$ for all $i\in I$. We call {\em subspecies of $S$ generated by $\{x_i\}_{i\in
I}$} the smallest subspecies of $S$ containing the family $\{x_i\}_{i\in I}$.  If $S$ is
furthermore an operad, we call {\em suboperad of $S$ generated by $\{x_i\}_{i\in I}$} the
smallest suboperad of $S$ containing the family $\{x_i\}_{i\in I}$. We write that {\em $x$
is generated by $\{x_i\}_{i\in I}$} if $x$ is in the suboperad generated by $\{x_i\}_{i\in
I}$.

These definitions given, it is natural to search for a smallest family of generators of
$\K \G$. The search of such a family is computationally hard. With the help of the computer,
we obtain that the generators of $\K \G$ of arity no more than $5$ are
\begin{equation}\begin{split} \label{equ:generators_G}
    &
    \begin{tikzpicture}[Centering,scale=.6]
        \node[UnlabeledNodeGraph](a)at(0,0){};
        \node[UnlabeledNodeGraph](b)at(1,0){};
    \end{tikzpicture},
    \enspace
    \begin{tikzpicture}[Centering,scale=.6]
        \node[UnlabeledNodeGraph](a)at(0,0){};
        \node[UnlabeledNodeGraph](b)at(1,0){};
        \draw[EdgeGraph](a)--(b);
    \end{tikzpicture},
    \qquad \quad
    \begin{tikzpicture}[Centering,scale=.3]
        \node[UnlabeledNodeGraph](a)at(-1,-1){};
        \node[UnlabeledNodeGraph](b)at(1,-1){};
        \node[UnlabeledNodeGraph](c)at(0,.707){};
        \draw[EdgeGraph](a)--(b);
        \draw[EdgeGraph](a)--(c);
        \draw[EdgeGraph](b)--(c);
    \end{tikzpicture},
    \qquad \quad
    \begin{tikzpicture}[Centering,scale=.4]
        \node[UnlabeledNodeGraph](a)at(0,0){};
        \node[UnlabeledNodeGraph](b)at(1,0){};
        \node[UnlabeledNodeGraph](c)at(2,0){};
        \node[UnlabeledNodeGraph](d)at(3,0){};
        \draw[EdgeGraph](a)--(b);
        \draw[EdgeGraph](b)--(c);
        \draw[EdgeGraph](c)--(d);
    \end{tikzpicture},
    \enspace
    \begin{tikzpicture}[Centering,scale=.4]
        \node[UnlabeledNodeGraph](a)at(-1,-1){};
        \node[UnlabeledNodeGraph](b)at(-1,1){};
        \node[UnlabeledNodeGraph](c)at(0,0){};
        \node[UnlabeledNodeGraph](d)at(-2,0){};
        \draw[EdgeGraph](a)--(b);
        \draw[EdgeGraph](a)--(c);
        \draw[EdgeGraph](a)--(d);
        \draw[EdgeGraph](b)--(d);
    \end{tikzpicture},
    \enspace
    \begin{tikzpicture}[Centering,scale=.4]
        \node[UnlabeledNodeGraph](a)at(-1,-1){};
        \node[UnlabeledNodeGraph](b)at(-1,1){};
        \node[UnlabeledNodeGraph](c)at(0,0){};
        \node[UnlabeledNodeGraph](d)at(-2,0){};
        \draw[EdgeGraph](a)--(b);
        \draw[EdgeGraph](a)--(c);
        \draw[EdgeGraph](a)--(d);
        \draw[EdgeGraph](b)--(c);
        \draw[EdgeGraph](b)--(d);
    \end{tikzpicture},
    \enspace
    \begin{tikzpicture}[Centering,scale=.4]
        \node[UnlabeledNodeGraph](a)at(-1,-1){};
        \node[UnlabeledNodeGraph](b)at(-1,1){};
        \node[UnlabeledNodeGraph](c)at(0,0){};
        \node[UnlabeledNodeGraph](d)at(-2,0){};
        \draw[EdgeGraph](a)--(b);
        \draw[EdgeGraph](a)--(c);
        \draw[EdgeGraph](a)--(d);
        \draw[EdgeGraph](b)--(c);
        \draw[EdgeGraph](b)--(d);
        \draw[EdgeGraph](c)--(d);
    \end{tikzpicture},
    \\
    &
    \begin{tikzpicture}[Centering,scale=.3]
        \node[UnlabeledNodeGraph](a)at(-2,0){};
        \node[UnlabeledNodeGraph](b)at(0,0){};
        \node[UnlabeledNodeGraph](c)at(-2,2){};
        \node[UnlabeledNodeGraph](d)at(0,2){};
        \node[UnlabeledNodeGraph](e)at(-1,-2){};
        \draw[EdgeGraph](a)--(b);
        \draw[EdgeGraph](a)--(c);
        \draw[EdgeGraph](a)--(e);
        \draw[EdgeGraph](b)--(d);
        \draw[EdgeGraph](b)--(e);
    \end{tikzpicture},
    \enspace
    \begin{tikzpicture}[Centering,scale=.3]
        \node[UnlabeledNodeGraph](a)at(-2,0){};
        \node[UnlabeledNodeGraph](b)at(0,-2){};
        \node[UnlabeledNodeGraph](c)at(-2,2){};
        \node[UnlabeledNodeGraph](d)at(0,0){};
        \node[UnlabeledNodeGraph](e)at(-2,-2){};
        \draw[EdgeGraph](a)--(c);
        \draw[EdgeGraph](a)--(d);
        \draw[EdgeGraph](a)--(e);
        \draw[EdgeGraph](b)--(d);
        \draw[EdgeGraph](b)--(e);
    \end{tikzpicture},
    \enspace
    \begin{tikzpicture}[Centering,scale=.3]
        \node[UnlabeledNodeGraph](a)at(-1,-1){};
        \node[UnlabeledNodeGraph](b)at(-1.25,.75){};
        \node[UnlabeledNodeGraph](c)at(0,2){};
        \node[UnlabeledNodeGraph](d)at(1.25,.75){};
        \node[UnlabeledNodeGraph](e)at(1,-1){};
        \draw[EdgeGraph](a)--(b);
        \draw[EdgeGraph](b)--(c);
        \draw[EdgeGraph](c)--(d);
        \draw[EdgeGraph](d)--(e);
        \draw[EdgeGraph](e)--(a);
    \end{tikzpicture},
    \enspace
    \begin{tikzpicture}[Centering,scale=.3]
        \node[UnlabeledNodeGraph](a)at(-2,0){};
        \node[UnlabeledNodeGraph](b)at(0,0){};
        \node[UnlabeledNodeGraph](c)at(-2,2){};
        \node[UnlabeledNodeGraph](d)at(0,2){};
        \node[UnlabeledNodeGraph](e)at(-1,-2){};
        \draw[EdgeGraph](a)--(b);
        \draw[EdgeGraph](a)--(c);
        \draw[EdgeGraph](a)--(d);
        \draw[EdgeGraph](a)--(e);
        \draw[EdgeGraph](b)--(d);
        \draw[EdgeGraph](b)--(e);
    \end{tikzpicture},
    \enspace
    \begin{tikzpicture}[Centering,scale=.4]
        \node[UnlabeledNodeGraph](a)at(0,0){};
        \node[UnlabeledNodeGraph](b)at(1,-1){};
        \node[UnlabeledNodeGraph](c)at(-1,1){};
        \node[UnlabeledNodeGraph](d)at(1,1){};
        \node[UnlabeledNodeGraph](e)at(-1,-1){};
        \draw[EdgeGraph](a)--(b);
        \draw[EdgeGraph](a)--(c);
        \draw[EdgeGraph](a)--(d);
        \draw[EdgeGraph](a)--(e);
        \draw[EdgeGraph](b)--(d);
        \draw[EdgeGraph](c)--(e);
    \end{tikzpicture},
    \enspace
    \begin{tikzpicture}[Centering,scale=.4]
        \node[UnlabeledNodeGraph](a)at(0,0){};
        \node[UnlabeledNodeGraph](b)at(1,-1){};
        \node[UnlabeledNodeGraph](c)at(-1,1){};
        \node[UnlabeledNodeGraph](d)at(1,1){};
        \node[UnlabeledNodeGraph](e)at(-1,-1){};
        \draw[EdgeGraph](a)--(b);
        \draw[EdgeGraph](a)--(d);
        \draw[EdgeGraph](a)--(e);
        \draw[EdgeGraph](b)--(d);
        \draw[EdgeGraph](b)--(e);
        \draw[EdgeGraph](c)--(e);
    \end{tikzpicture},
    \begin{tikzpicture}[Centering,scale=.4]
        \node[UnlabeledNodeGraph](a)at(0,0){};
        \node[UnlabeledNodeGraph](b)at(1,-1){};
        \node[UnlabeledNodeGraph](c)at(-1,1){};
        \node[UnlabeledNodeGraph](d)at(1,1){};
        \node[UnlabeledNodeGraph](e)at(-1,-1){};
        \draw[EdgeGraph](a)--(c);
        \draw[EdgeGraph](a)--(d);
        \draw[EdgeGraph](a)--(e);
        \draw[EdgeGraph](b)--(d);
        \draw[EdgeGraph](b)--(e);
        \draw[EdgeGraph](c)--(e);
    \end{tikzpicture},
    \enspace
    \begin{tikzpicture}[Centering,scale=.3]
        \node[UnlabeledNodeGraph](a)at(0,-1){};
        \node[UnlabeledNodeGraph](b)at(3,-1){};
        \node[UnlabeledNodeGraph](c)at(1.5,2){};
        \node[UnlabeledNodeGraph](d)at(1.5,.5){};
        \node[UnlabeledNodeGraph](e)at(1.5,-2.5){};
        \draw[EdgeGraph](a)--(b);
        \draw[EdgeGraph](a)--(c);
        \draw[EdgeGraph](a)--(d);
        \draw[EdgeGraph](a)--(e);
        \draw[EdgeGraph](b)--(c);
        \draw[EdgeGraph](b)--(d);
        \draw[EdgeGraph](b)--(e);
    \end{tikzpicture},
    \enspace
    \begin{tikzpicture}[Centering,scale=.3]
        \node[UnlabeledNodeGraph](a)at(0,-1.5){};
        \node[UnlabeledNodeGraph](b)at(3,-1){};
        \node[UnlabeledNodeGraph](c)at(3,-2){};
        \node[UnlabeledNodeGraph](d)at(1.5,0){};
        \node[UnlabeledNodeGraph](e)at(1.5,-3){};
        \draw[EdgeGraph](a)--(b);
        \draw[EdgeGraph](a)--(c);
        \draw[EdgeGraph](a)--(d);
        \draw[EdgeGraph](a)--(e);
        \draw[EdgeGraph](b)--(c);
        \draw[EdgeGraph](b)--(d);
        \draw[EdgeGraph](c)--(e);
    \end{tikzpicture},
    \enspace
    \begin{tikzpicture}[Centering,scale=.6]
        \node[UnlabeledNodeGraph](a)at(0,0){};
        \node[UnlabeledNodeGraph](b)at(1,1){};
        \node[UnlabeledNodeGraph](c)at(.5,-1){};
        \node[UnlabeledNodeGraph](d)at(0,1){};
        \node[UnlabeledNodeGraph](e)at(1,0){};
        \draw[EdgeGraph](a)--(b);
        \draw[EdgeGraph](a)--(c);
        \draw[EdgeGraph](a)--(d);
        \draw[EdgeGraph](a)--(e);
        \draw[EdgeGraph](b)--(d);
        \draw[EdgeGraph](b)--(e);
        \draw[EdgeGraph](c)--(e);
    \end{tikzpicture},
    \\
    &
    \quad
    \begin{tikzpicture}[Centering,scale=.6]
        \node[UnlabeledNodeGraph](a)at(0,0){};
        \node[UnlabeledNodeGraph](b)at(1,1){};
        \node[UnlabeledNodeGraph](c)at(2,-1){};
        \node[UnlabeledNodeGraph](d)at(0,1){};
        \node[UnlabeledNodeGraph](e)at(1,0){};
        \draw[EdgeGraph](a)--(c);
        \draw[EdgeGraph](a)--(d);
        \draw[EdgeGraph](a)--(e);
        \draw[EdgeGraph](b)--(c);
        \draw[EdgeGraph](b)--(d);
        \draw[EdgeGraph](b)--(e);
        \draw[EdgeGraph](c)--(e);
    \end{tikzpicture},
    \enspace
    \begin{tikzpicture}[Centering,scale=.6]
        \node[UnlabeledNodeGraph](a)at(0,0){};
        \node[UnlabeledNodeGraph](b)at(1,1){};
        \node[UnlabeledNodeGraph](c)at(2,-1){};
        \node[UnlabeledNodeGraph](d)at(0,1){};
        \node[UnlabeledNodeGraph](e)at(1,0){};
        \draw[EdgeGraph](a)--(b);
        \draw[EdgeGraph](a)--(c);
        \draw[EdgeGraph](a)--(d);
        \draw[EdgeGraph](a)--(e);
        \draw[EdgeGraph](b)--(c);
        \draw[EdgeGraph](b)--(d);
        \draw[EdgeGraph](b)--(e);
        \draw[EdgeGraph](c)--(e);
    \end{tikzpicture},
    \enspace
    \begin{tikzpicture}[Centering,scale=.6,rotate=90]
        \node[UnlabeledNodeGraph](a)at(0,0){};
        \node[UnlabeledNodeGraph](b)at(1,1){};
        \node[UnlabeledNodeGraph](c)at(-1,-1){};
        \node[UnlabeledNodeGraph](d)at(0,1){};
        \node[UnlabeledNodeGraph](e)at(1,0){};
        \draw[EdgeGraph](a)--(b);
        \draw[EdgeGraph](a)--(c);
        \draw[EdgeGraph](a)--(d);
        \draw[EdgeGraph](a)--(e);
        \draw[EdgeGraph](b)--(d);
        \draw[EdgeGraph](b)--(e);
        \draw[EdgeGraph](c)--(d);
        \draw[EdgeGraph](c)--(e);
    \end{tikzpicture},
    \enspace
    \begin{tikzpicture}[Centering,scale=.6,rotate=90]
        \node[UnlabeledNodeGraph](a)at(0,0){};
        \node[UnlabeledNodeGraph](b)at(1,1){};
        \node[UnlabeledNodeGraph](c)at(-1,-1){};
        \node[UnlabeledNodeGraph](d)at(0,1){};
        \node[UnlabeledNodeGraph](e)at(1,0){};
        \draw[EdgeGraph](a)--(b);
        \draw[EdgeGraph](a)--(c);
        \draw[EdgeGraph](a)--(d);
        \draw[EdgeGraph](a)--(e);
        \draw[EdgeGraph](b)--(d);
        \draw[EdgeGraph](b)--(e);
        \draw[EdgeGraph](c)--(d);
        \draw[EdgeGraph](c)--(e);
        \draw[EdgeGraph](d)--(e);
    \end{tikzpicture},
    \enspace
    \begin{tikzpicture}[Centering,scale=.35]
        \node[UnlabeledNodeGraph](a)at(-1,-1){};
        \node[UnlabeledNodeGraph](b)at(-1.25,.75){};
        \node[UnlabeledNodeGraph](c)at(0,2){};
        \node[UnlabeledNodeGraph](d)at(1.25,.75){};
        \node[UnlabeledNodeGraph](e)at(1,-1){};
        \draw[EdgeGraph](a)--(b);
        \draw[EdgeGraph](a)--(c);
        \draw[EdgeGraph](a)--(d);
        \draw[EdgeGraph](a)--(e);
        \draw[EdgeGraph](b)--(c);
        \draw[EdgeGraph](b)--(d);
        \draw[EdgeGraph](b)--(e);
        \draw[EdgeGraph](c)--(d);
        \draw[EdgeGraph](c)--(e);
        \draw[EdgeGraph](d)--(e);
    \end{tikzpicture}.
\end{split}\end{equation}
Due to the symmetric group action on $\K \G$, only the knowledge of the shapes of the graphs
is significant.  While~\eqref{equ:generators_G} does not provide to us any particular
insight on a possible characterisation of the generators, it does suggest that any graph
with enough edges must be a generator. This is confirmed by the following lemma.

\begin{lemma} \label{lem:infinite_number_generators_G}
Let $\{V_i\}_{i\in I}$ be a family of non empty finite sets, $\{g_i\}_{i \in I}$ be a family
of graphs such that $g_i \in\G[V_i]$, and let $g$ be a graph in $\G[V]$ with at least
$\binom{n-1}{2} +1$ edges, where $n = |V|$. Then $g$ is generated by $\{g_i\}_{i\in I}$ if
and only if $g=g_i$ for some $i\in I$.
\end{lemma}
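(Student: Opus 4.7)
The implication ``$\Leftarrow$'' is immediate since every $g_i$ belongs to the suboperad $\mathcal{O}$ they generate. For ``$\Rightarrow$'' I would argue by strong induction on $n = |V|$, establishing the contrapositive: if $|E(g)| \geq \binom{n-1}{2} + 1$ and $g \neq g_i$ for all $i$, then $g \notin \mathcal{O}$. The small base cases ($n \leq 2$) are handled by direct inspection, since any nontrivial partial composition already has arity at least three.

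Suppose $g \in \mathcal{O}$ and $g \neq g_i$ for every $i$. Then $g$ must appear in the support of at least one composition $h_1 \circ_\ast h_2$ where $h_1, h_2 \in \mathcal{O}$ both have arity $\geq 2$. Setting $V_1' = V_{h_1} \setminus \{\ast\}$, $V_2 = V_{h_2}$, and $a = |V_1'|,\ b = |V_2|$, the explicit insertion formula shows that any graph in the support restricts to $h_1 \setminus \{\ast\}$ on $V_1'$ and to $h_2$ on $V_2$, and that every vertex of $V_1'$ has at most one neighbor in $V_2$, since these neighbors come from a function $f : N_\ast \to V_2$. This yields the bound
\begin{equation*}
|E(g)| \leq \binom{a}{2} + \binom{b}{2} + a.
\end{equation*}
Combining this with $|E(g)| \geq \binom{n-1}{2} + 1$ and $a + b = n$, a short computation gives $(a-1)(b-2) \leq 0$, so either $a = 1$ or $b \leq 2$. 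In both cases one of $h_1, h_2$ must be a two-vertex graph; the ``two isolated vertices'' option is ruled out because it would force an isolated vertex in $g$, contradicting the edge lower bound. Hence one factor is the edge and, by a second edge-count, the other factor is forced to be $K_{n-1}$. In particular, $g$ itself must have the shape of $K_{n-1}$ with an attached pendant.

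Invoking the induction hypothesis on $K_{n-1}$, which has $\binom{n-1}{2} \geq \binom{n-2}{2} + 1$ edges when $n \geq 3$, we conclude that $K_{n-1}$ itself must be one of the $g_j$. The main obstacle is then the final cancellation step: one must show that no linear combination of the compositions ``edge $\circ_\ast K_{n-1}$'' and ``$K_{n-1} \circ_\ast$ edge'' (together with their relabelings) can equal the single basis element $g$. The plan would be to write down both expansions explicitly: the first family produces, for each choice of pendant vertex, a sum of $n-1$ distinct pendant graphs, while the second produces a sum of $2^{n-2}$ graphs indexed by functions $V_1' \to V_2$, of which only the two constant functions yield pendant graphs. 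Matching coefficients of pendant versus non-pendant graphs in any purported expression $g = \sum c_\alpha\, \alpha$ forces an overdetermined linear system whose only resolution requires $g$ itself to belong to $\{g_i\}$, contradicting $g \neq g_i$ and completing the induction.
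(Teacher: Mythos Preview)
Your argument is far more elaborate than the paper's and, in the end, does not close. The paper's sketch is a one–line edge count: the number of edges of any graph in the support of $h_1\circ_\ast h_2$ equals $|E(h_1)|+|E(h_2)|$; since $h_1$ has at most $\binom{|V_{h_1}|}{2}$ edges and $h_2$ at most $\binom{|V_{h_2}|}{2}$, and $|V_{h_1}|+|V_{h_2}|=n+1$ with both at least $2$, this sum is at most $\binom{n-1}{2}+1$. A basis graph $g$ lying in the suboperad but not among the (relabelled) $g_i$ must appear in the support of \emph{some} such composition, so $|E(g)|\le \binom{n-1}{2}+1$. That is the whole argument; there is no induction, no case analysis, and no cancellation step.

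Your proof contains two genuine gaps. First, you assert that the factors $h_1,h_2$ can be taken in $\mathcal{O}$. What the spanning argument actually gives is that $g$ lies in the support of $h_1\circ_\ast h_2$ for some \emph{graphs} $h_1,h_2$ occurring in the supports of elements of $\mathcal{O}$; those supports need not be contained in $\mathcal{O}$. This is harmless for the edge bound, but it invalidates your later appeal to the induction hypothesis on $K_{n-1}$.

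Second, and more seriously, the ``final cancellation step'' you defer cannot be carried out. Take $n=3$ and $\{g_i\}=\{\Segment{a}{b}\}$. Any single path on three vertices has exactly $\binom{2}{2}+1=2$ edges, yet each such path \emph{is} in the suboperad generated by the edge: the three compositions $\Segment{x}{\ast}\circ_\ast\Segment{y}{z}$ yield the three sums $P_y+P_z$, $P_x+P_z$, $P_x+P_y$, which are linearly independent and hence span all three paths individually (this is also forced by Proposition~\ref{commag}, since $\dim\K\ComMag[\{a,b,c\}]=3$). So the boundary case $|E(g)|=\binom{n-1}{2}+1$ on which your induction hinges is exactly where the statement fails; the paper's edge count, and its application in Proposition~\ref{gc}, really only use the strict inequality $|E(g)|>\binom{n-1}{2}+1$, which your computation $(a-1)(b-2)\le 0$ already delivers without any of the subsequent work.
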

\begin{proof}[Sketch of proof.]
Remark that the number of edges of the graphs in the support of $g_1\circ_\ast g_2$ is the
sum of the number of edges in $g_1$ with the number of edges in $g_2$. Hence graphs with too
many edges cannot appear in the support of a partial composition.
\end{proof}

\begin{proposition} \label{gc}
    The operad $\K \G$ is not free and has an infinite number of generators.
\end{proposition}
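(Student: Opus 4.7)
The plan is to prove the two assertions separately: the infinitude of generators follows directly from Lemma~\ref{lem:infinite_number_generators_G}, while non-freeness comes from a dimension count at arity $3$.

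For the infinite number of generators, I would apply the lemma to the complete graphs. For every $n\geq 2$, the graph $K_n$ has $\binom{n}{2}$ edges, and a one-line check gives $\binom{n}{2}\geq\binom{n-1}{2}+1$. By Lemma~\ref{lem:infinite_number_generators_G}, each $K_n$ must belong to any generating family of $\K\G$, so no bounded-arity (a fortiori no finite) family can generate the operad.

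For non-freeness, I would suppose for contradiction that $\K\G\cong\FreeOp_H$ for some generating species $H$ (necessarily with no arity-$1$ part, since $\K\G[[1]]$ is the $1$-dimensional unit). Because the partial composition of two elements of arity at least $2$ produces an element of arity at least $3$, no arity-$2$ element of $\K\G$ arises as a non-trivial composition; hence $H$ must supply all of $\K\G[[2]]$, giving $\dim H[[2]] = 2$. I would then decompose $\FreeOp_H[\{1,2,3\}]$ using the species identity $\FreeOp_H = X \oplus (H\circ\FreeOp_H)$ and sum over set partitions of $\{1,2,3\}$: the three partitions into a singleton and a pair each contribute $\dim H[[2]]\cdot\dim\FreeOp_H[\{j,k\}] = 2\cdot 2 = 4$, while the discrete partition contributes $\dim H[[3]]$. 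This yields $\dim\FreeOp_H[\{1,2,3\}] = 12 + \dim H[[3]]$, whereas $\dim\K\G[\{1,2,3\}] = 2^{\binom{3}{2}} = 8$, forcing $\dim H[[3]] = -4$, which is absurd.

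The main point of the second argument is to force $\dim H[[2]] = 2$; this relies on the absence of non-trivial arity-$1$ compositions in $\K\G$, so that arity-$2$ elements are automatically indecomposable. Once this is in place, the strict inequality $12 > 8$ already produces the contradiction, independently of the values of $\dim H[[n]]$ for $n\geq 3$.
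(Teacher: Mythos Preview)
Your proof is correct. The treatment of the infinite-generation claim is essentially the same as the paper's: both invoke Lemma~\ref{lem:infinite_number_generators_G}, and your explicit mention of $K_n$ just makes concrete what the lemma already guarantees.

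For non-freeness the two arguments diverge. The paper writes down an explicit quadratic relation among compositions of the segment generator $\Segment{a}{b}$, showing that a particular signed sum of three partial compositions in $\K\G[\{a,b,c\}]$ vanishes. You instead run a dimension count: any free model would force $\dim H[[2]]=2$, hence $\dim\FreeOp_H[\{1,2,3\}]\geq 12$, contradicting $\dim\K\G[\{1,2,3\}]=8$. Your route is quicker and avoids computing any compositions, and the inequality $12>8$ is robust enough that you never need to pin down $\dim H[[3]]$. The paper's explicit relation, on the other hand, carries more structural information: it lives entirely inside the suboperad $\K\T$ (all graphs involved are paths), so it simultaneously shows that $\K\T$ is not free---a conclusion your arity-$3$ dimension count does \emph{not} yield, since $\dim\K\T[[3]]=3$ is compatible with freeness on one binary generator. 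So your argument is cleaner for the statement at hand, while the paper's is reusable in the smaller suboperads discussed later.
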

\begin{proof}
The fact that $\K \G$ has an infinite number of generators is a direct consequence of
Lemma~\ref{lem:infinite_number_generators_G}. Moreover, the relation
\begin{equation}\begin{split} \label{nf}
    \begin{tikzpicture}[Centering,scale=.7]
        \node[NodeGraph](a)at(0,0){$a$};
        \node[NodeGraph](s)at(1,0){$\ast$};
        \draw[EdgeGraph](a)--(s);
    \end{tikzpicture}
    &
    \circ_\ast
    \begin{tikzpicture}[Centering,scale=.7]
        \node[NodeGraph](b)at(0,0){$b$};
        \node[NodeGraph](c)at(1,0){$c$};
        \draw[EdgeGraph](b)--(c);
    \end{tikzpicture}
    \enspace + \enspace
    \begin{tikzpicture}[Centering,scale=.7]
        \node[NodeGraph](c)at(0,0){$c$};
        \node[NodeGraph](s)at(1,0){$\ast$};
        \draw[EdgeGraph](c)--(s);
    \end{tikzpicture}
    \circ_\ast
    \begin{tikzpicture}[Centering,scale=.7]
        \node[NodeGraph](b)at(0,0){$b$};
        \node[NodeGraph](a)at(1,0){$a$};
        \draw[EdgeGraph](b)--(a);
    \end{tikzpicture}
    \enspace - \enspace
    \begin{tikzpicture}[Centering,scale=.7]
        \node[NodeGraph](b)at(0,0){$b$};
        \node[NodeGraph](s)at(1,0){$\ast$};
        \draw[EdgeGraph](b)--(s);
    \end{tikzpicture}
    \circ_\ast
    \begin{tikzpicture}[Centering,scale=.7]
        \node[NodeGraph](a)at(0,0){$a$};
        \node[NodeGraph](c)at(1,0){$c$};
        \draw[EdgeGraph](a)--(c);
    \end{tikzpicture}
    \enspace
    - 2 \,
    \begin{tikzpicture}[Centering,scale=.7]
        \node[NodeGraph](a)at(0,0){$a$};
        \node[NodeGraph](b)at(1,0){$b$};
        \node[NodeGraph](c)at(2,0){$c$};
        \draw[EdgeGraph](a)--(b);
        \draw[EdgeGraph](b)--(c);
    \end{tikzpicture}
    \\[.5em]
    & =
    \begin{tikzpicture}[Centering,scale=.7]
        \node[NodeGraph](a)at(0,0){$a$};
        \node[NodeGraph](b)at(1,0){$b$};
        \node[NodeGraph](c)at(2,0){$c$};
        \draw[EdgeGraph](a)--(b);
        \draw[EdgeGraph](b)--(c);
    \end{tikzpicture}
    \enspace + \enspace
    \begin{tikzpicture}[Centering,scale=.7]
        \node[NodeGraph](b)at(0,0){$b$};
        \node[NodeGraph](c)at(1,0){$c$};
        \node[NodeGraph](a)at(2,0){$a$};
        \draw[EdgeGraph](b)--(c);
        \draw[EdgeGraph](c)--(a);
    \end{tikzpicture}
    \enspace + \enspace
    \begin{tikzpicture}[Centering,scale=.7]
        \node[NodeGraph](c)at(0,0){$c$};
        \node[NodeGraph](b)at(1,0){$b$};
        \node[NodeGraph](a)at(2,0){$a$};
        \draw[EdgeGraph](c)--(b);
        \draw[EdgeGraph](b)--(a);
    \end{tikzpicture}
    \enspace + \enspace
    \begin{tikzpicture}[Centering,scale=.7]
        \node[NodeGraph](b)at(0,0){$b$};
        \node[NodeGraph](a)at(1,0){$a$};
        \node[NodeGraph](c)at(2,0){$c$};
        \draw[EdgeGraph](b)--(a);
        \draw[EdgeGraph](a)--(c);
    \end{tikzpicture}
    \\[.5em]
    & \qquad - \enspace
    \begin{tikzpicture}[Centering,scale=.7]
        \node[NodeGraph](b)at(0,0){$b$};
        \node[NodeGraph](a)at(1,0){$a$};
        \node[NodeGraph](c)at(2,0){$c$};
        \draw[EdgeGraph](b)--(a);
        \draw[EdgeGraph](a)--(c);
    \end{tikzpicture}
    \enspace - \enspace
    \begin{tikzpicture}[Centering,scale=.7]
        \node[NodeGraph](a)at(0,0){$a$};
        \node[NodeGraph](c)at(1,0){$c$};
        \node[NodeGraph](b)at(2,0){$b$};
        \draw[EdgeGraph](a)--(c);
        \draw[EdgeGraph](c)--(b);
    \end{tikzpicture}
    - 2 \,
    \begin{tikzpicture}[Centering,scale=.7]
        \node[NodeGraph](a)at(0,0){$a$};
        \node[NodeGraph](b)at(1,0){$b$};
        \node[NodeGraph](c)at(2,0){$c$};
        \draw[EdgeGraph](a)--(b);
        \draw[EdgeGraph](b)--(c);
    \end{tikzpicture}
    \\[.5em]
    & = 0
\end{split}\end{equation}
shows that $\K \G$ is not free.
\end{proof}

As a consequence of Proposition~\ref{gc}, it seems particularly hard to further investigate
the structure of $\K \G$. Let us restrict further to its suboperad $\K \T$ of trees. The
generators of $\K \T$ until arity $6$ are
\begin{equation}\begin{split}
    \begin{tikzpicture}[Centering,scale=.6]
        \node[UnlabeledNodeGraph](a)at(0,0){};
        \node[UnlabeledNodeGraph](b)at(1,0){};
        \draw[EdgeGraph](a)--(b);
    \end{tikzpicture},
    \qquad \quad
    \begin{tikzpicture}[Centering,scale=.35]
        \node[UnlabeledNodeGraph](a)at(0,0){};
        \node[UnlabeledNodeGraph](b)at(1,0){};
        \node[UnlabeledNodeGraph](c)at(2,0){};
        \node[UnlabeledNodeGraph](d)at(3,0){};
        \draw[EdgeGraph](a)--(b);
        \draw[EdgeGraph](b)--(c);
        \draw[EdgeGraph](c)--(d);
    \end{tikzpicture},
    \qquad \quad
    \begin{tikzpicture}[Centering,scale=.35]
        \node[UnlabeledNodeGraph](a)at(0,0){};
        \node[UnlabeledNodeGraph](b)at(-1,0){};
        \node[UnlabeledNodeGraph](c)at(0,1){};
        \node[UnlabeledNodeGraph](d)at(1,0){};
        \node[UnlabeledNodeGraph](e)at(-2,0){};
        \draw[EdgeGraph](a)--(b);
        \draw[EdgeGraph](a)--(c);
        \draw[EdgeGraph](a)--(d);
        \draw[EdgeGraph](b)--(e);
    \end{tikzpicture},
    \qquad \quad
    \begin{tikzpicture}[Centering,scale=.35]
        \node[UnlabeledNodeGraph](a)at(0,0){};
        \node[UnlabeledNodeGraph](b)at(-1,0){};
        \node[UnlabeledNodeGraph](c)at(0,1){};
        \node[UnlabeledNodeGraph](d)at(1,0){};
        \node[UnlabeledNodeGraph](e)at(0,-1){};
        \node[UnlabeledNodeGraph](f)at(-2,0){};
        \draw[EdgeGraph](a)--(b);
        \draw[EdgeGraph](a)--(c);
        \draw[EdgeGraph](a)--(d);
        \draw[EdgeGraph](a)--(e);
        \draw[EdgeGraph](b)--(f);
    \end{tikzpicture},
    \enspace
    \begin{tikzpicture}[Centering,scale=.4]
        \node[UnlabeledNodeGraph](a)at(0,0){};
        \node[UnlabeledNodeGraph](b)at(-1,0){};
        \node[UnlabeledNodeGraph](c)at(1,-1){};
        \node[UnlabeledNodeGraph](d)at(1,1){};
        \node[UnlabeledNodeGraph](e)at(-2,-1){};
        \node[UnlabeledNodeGraph](f)at(-2,1){};
        \draw[EdgeGraph](a)--(b);
        \draw[EdgeGraph](a)--(c);
        \draw[EdgeGraph](a)--(d);
        \draw[EdgeGraph](b)--(e);
        \draw[EdgeGraph](b)--(f);
    \end{tikzpicture},
    \enspace
    \begin{tikzpicture}[Centering,scale=.4]
        \node[UnlabeledNodeGraph](a)at(0,0){};
        \node[UnlabeledNodeGraph](b)at(1,0){};
        \node[UnlabeledNodeGraph](c)at(2,0){};
        \node[UnlabeledNodeGraph](d)at(3,0){};
        \node[UnlabeledNodeGraph](e)at(4,0){};
        \node[UnlabeledNodeGraph](f)at(2,1){};
        \draw[EdgeGraph](a)--(b);
        \draw[EdgeGraph](b)--(c);
        \draw[EdgeGraph](c)--(d);
        \draw[EdgeGraph](d)--(e);
        \draw[EdgeGraph](c)--(f);
    \end{tikzpicture}.
\end{split}\end{equation}
This operad $\K \T$ has a non trivial link with the pre-Lie operad $\PLie$~\cite{CL01}. To
show this we first need to introduce a new operad on oriented multigraphs.

An \textit{oriented multigraph} on $V$ is a graph where each edge end is either unlabelled
or labelled with an arrow head. We denote by $\MG_{or}$ the set species of oriented graphs,
by $\G_{or}$ the set species of oriented graphs, and by $\MG_{orc}$ and $\G_{orc}$ their
connected counterparts.

Let $V_1$ and $V_2$ be two disjoint sets such that $\ast \in V_1$. For any rooted oriented
multigraphs $(g_1, v_1) \in \MG_{or}^\bullet[V_1]$ and $(g_2, v_2) \in
\MG_{or}[V_2]^\bullet$, the \textit{rooted insertion} of $(g_2, v_2)$ into $(g_1, v_1)$ is
the sum of all the rooted multigraphs of $\MG_{or}^\bullet[V_1 \setminus \{\ast\} \sqcup
V_2]$ obtained by the following process:
\begin{enumerate}
    \item Do the disjoint union of $g_1$ and $g_2$;

    \item Remove the vertex $\ast$. We then have some edges with a loose end;

    \item Connect each non labelled loose end to $v_2$;

    \item Connect each labelled loose end to any vertex in $V_2$;

    \item The new root is $v_1$ if $v_1 \ne \ast$ and is $v_2$ otherwise.
\end{enumerate}
For instance, by depicting by squares the roots of the graphs,
\begin{equation}
 \label{canoor}
    \begin{tikzpicture}[Centering,scale=.7]
        \node[NodeGraph](s)at(0,0){$\ast$};
        \node[RootGraph](a)at(1,1){$a$};
        \node[NodeGraph](b)at(1,-1){$b$};
        \draw[ArcGraph](a)--(s);
        \draw[ArcGraph](s)--(b);
    \end{tikzpicture}
    \enspace \circ_\ast \enspace
    \begin{tikzpicture}[Centering,scale=1]
        \node[RootGraph](c)at(0,0){$c$};
        \node[NodeGraph](d)at(1,0){$a$};
        \draw[ArcGraph](c)--(d);
    \end{tikzpicture}
    \enspace = \enspace
    \begin{tikzpicture}[Centering,scale=.7]
        \node[RootGraph](a)at(1,1){$a$};
        \node[NodeGraph](b)at(1,-1){$b$};
        \node[NodeGraph](c)at(0,0){$c$};
        \node[NodeGraph](d)at(2,0){$d$};
        \draw[ArcGraph](a)--(c);
        \draw[ArcGraph](c)--(b);
        \draw[ArcGraph](c)--(d);
    \end{tikzpicture}
    \enspace + \enspace
    \begin{tikzpicture}[Centering,scale=.7]
        \node[RootGraph](a)at(1,1){$a$};
        \node[NodeGraph](b)at(1,-1){$b$};
        \node[NodeGraph](c)at(0,0){$c$};
        \node[NodeGraph](d)at(2,0){$d$};
        \draw[ArcGraph](a)--(d);
        \draw[ArcGraph](c)--(d);
        \draw[ArcGraph](c)--(b);
    \end{tikzpicture}
\end{equation}

\begin{theorem}
    The species $\K \MG_{orc}^\bullet$, endowed with the rooted insertion as partial
    composition, is an operad.
\end{theorem}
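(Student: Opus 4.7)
The plan is to verify the three operad axioms of Definition~\ref{def:operads} for the rooted insertion on $\K \MG_{or c}^\bullet$, mirroring the strategy behind the fact that $\K \MG$ is an operad. First, I would check that rooted insertion actually takes values in $\K \MG_{or c}^\bullet$: the pointing is unambiguously specified by step~$(5)$ of the construction, and connectivity is preserved because every connected component of $g_1 \setminus \{\ast\}$ must have contained at least one vertex adjacent to $\ast$ in $g_1$ (otherwise that component would already have been disconnected from $\ast$), so each such component gets linked to the connected graph $g_2$ by at least one reattached edge. The unit axiom is then immediate: the unit sends the singleton $\{\ast\}$ to the edgeless pointed graph on $\{\ast\}$, and inserting it into any $(g,v)$ at $\ast$ yields $(g,v)$ itself, since no loose ends arise and $v$ remains the root.

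Next, I would check parallel associativity. For two distinct vertices $\ast_1, \ast_2$ of a graph $g_1$, inserting $(g_2,v_2)$ at $\ast_1$ and $(g_3,v_3)$ at $\ast_2$ involves disjoint local modifications: the loose ends produced by deleting $\ast_1$ can only be reattached inside $V_2$, and those produced by deleting $\ast_2$ only inside $V_3$, so the two sums can be matched term by term. The root of the composite is $v_1$ if $v_1 \notin \{\ast_1,\ast_2\}$, or else $v_2$ or $v_3$ according to which $\ast_i$ it equals, consistently in both orderings.

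The main obstacle is the sequential (nested) associativity, namely $(g_1 \circ_\ast g_2)\circ_{\ast'} g_3 = g_1 \circ_\ast (g_2 \circ_{\ast'} g_3)$ for $\ast' \in V_2$, because one must track what happens to the root and to unlabelled loose ends through two successive deletions. I would argue that each labelled loose end arising from removing $\ast$ in $g_1$ ends up attached either (i) to a vertex of $V_2 \setminus \{\ast'\}$ or (ii) to a vertex of $V_3$; on the left, case~(ii) corresponds to first landing on $\ast'$ and then being re-routed into $V_3$ when $\ast'$ is deleted, while on the right it corresponds directly to landing in $V_3$. Each unlabelled loose end from $\ast$ must attach to the root of the intermediate graph produced by the other composition, which in both orderings equals $v_2$ when $v_2 \neq \ast'$ and $v_3$ otherwise, exactly matching the rule of step~$(5)$ applied to $g_2 \circ_{\ast'} g_3$. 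The root of the final composite agrees in the same way, depending on whether $v_1 = \ast$ and $v_2 = \ast'$. A bijection on summands indexed by the choices above then yields the equality, and equivariance under bijections of the underlying set is inherited automatically from the species structure, completing the verification.
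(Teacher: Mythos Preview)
Your verification strategy is sound and the outline is correct. Note, however, that the paper is an extended abstract and explicitly defers all proofs to the long version~\cite{AGKT19}; it contains no proof of this theorem, so there is nothing to compare against directly. A direct check of closure, the unit, and the two associativity axioms is the natural route, and your treatment of the root-tracking (step~$(5)$) and of the labelled/unlabelled dichotomy for loose ends is exactly what makes the rooted variant work.

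One small omission in your sequential associativity paragraph: you explicitly follow only the loose ends originating from $\ast$ in $g_1$, but deleting $\ast'$ also produces loose ends coming from edges of $g_2$ itself. These behave identically on both sides (on the left they appear after $g_2$ has been glued in and $\ast'$ is removed; on the right they are handled already when forming $g_2 \circ_{\ast'} g_3$), so the point is trivial, but it should be stated for the bijection of summands to be complete. With that addition, and the observation that a labelled loose end which lands on $\ast'$ remains labelled when $\ast'$ is subsequently deleted (so it may again go anywhere in $V_3$), your case split (i)/(ii) gives the required term-by-term correspondence.
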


This makes $\K \G_{orc}^{\bullet}$ a suboperad of $\K \MG_{orc}^{\bullet}$.

In a rooted tree, each edge has a parent end and a child end. Given a rooted tree $t$ with
root $r$, denote by $t_r$ the oriented tree where each parent end of $t$ is labelled and
each child end is non labelled. Then, the monomorphism $\T^{\bullet}\hookrightarrow
\G_{orc}^{\bullet}$ which sends each ordered pair $(t,r)$, where $t$ is a tree and $r$ is
its root, on $(t_r,r)$ induces an operad structure on the species of rooted trees which is
exactly the operad $\PLie$.

\begin{proposition} \label{prelie}
The monomorphism of species $\psi : \K \T \to \K \T^{\bullet}$ defined, for any tree $t \in
\T[V]$ by
\begin{equation}
    \psi(t) =  \sum_{r \in V} (t, r),
\end{equation}
is a monomorphism of operads from $\K \T$ to $\PLie$.
\end{proposition}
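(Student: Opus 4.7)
The plan is to verify two things: that $\psi$ is injective and that it commutes with the partial composition (compatibility with the species structure is immediate from the definition). Injectivity is clear since from $\psi(t) = \sum_{r \in V}(t,r)$ one recovers the unrooted tree $t$ by forgetting the root of any summand, so the supports of $\psi(t)$ for distinct $t$ are disjoint.

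To prove the morphism property $\psi(t_1 \circ_\ast t_2) = \psi(t_1) \circ_\ast \psi(t_2)$, I will compute each side explicitly and match terms. For the left-hand side, the insertion formula already stated for $\K \G$ (which restricts to $\K \T$, since reconnecting the components of the forest $t_1 \setminus \{\ast\}$ to the single tree $t_2$ by one edge each yields a tree) gives
$$\psi(t_1 \circ_\ast t_2) = \sum_{f : N_\ast \to V_2} \; \sum_{r \in (V_1 \setminus \{\ast\}) \sqcup V_2} (t^{(f)}, r),$$
where $N_\ast$ denotes the neighbours of $\ast$ in $t_1$ and $t^{(f)}$ is the tree obtained by removing $\ast$ and attaching each $v \in N_\ast$ to $f(v)$. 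For the right-hand side, rooting $t_1$ at $r_1 \in V_1$ partitions $N_\ast$ into the set $C^{r_1}$ of children of $\ast$ and, when $r_1 \neq \ast$, the parent $p^{r_1}$ of $\ast$; the rooted insertion reconnects each child via an arbitrary function $g : C^{r_1} \to V_2$ and the parent (if any) to $r_2$, while the new root is $r_1$ if $r_1 \neq \ast$ and $r_2$ otherwise, so
$$\psi(t_1) \circ_\ast \psi(t_2) = \sum_{\substack{r_1 \in V_1 \\ r_2 \in V_2}} \; \sum_{g : C^{r_1} \to V_2} (t^{(f_{r_1,r_2,g})}, \rho(r_1,r_2)),$$
where $f_{r_1,r_2,g} : N_\ast \to V_2$ restricts to $g$ on $C^{r_1}$ and sends $p^{r_1}$ (when it exists) to $r_2$, and $\rho(r_1,r_2)$ equals $r_1$ if $r_1 \neq \ast$ and $r_2$ otherwise.

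The crux is then to prove that the assignment $(r_1, r_2, g) \mapsto (f_{r_1,r_2,g}, \rho(r_1,r_2))$ is a bijection onto the index set of the left-hand sum, which I will argue by cases on $r$. If $r \in V_1 \setminus \{\ast\}$, then $\rho(r_1,r_2) = r$ forces $r_1 = r$; the parent $p^{r_1}$ is then the unique neighbour of $\ast$ on the path from $\ast$ to $r$ in $t_1$, and the identity $f_{r_1,r_2,g} = f$ forces $r_2 = f(p^{r_1})$ and $g = f|_{C^{r_1}}$. If $r \in V_2$, then the rule for $\rho$ forces $r_1 = \ast$ and $r_2 = r$, in which case $C^{r_1} = N_\ast$ and $g = f$. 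In both cases the preimage is unique, and conversely every triple $(r_1, r_2, g)$ produces a valid pair $(f, r)$ with $r \in (V_1 \setminus \{\ast\}) \sqcup V_2$. The only part I expect to require care is keeping the two subcases of the new-root rule straight; once this is done, the morphism identity follows immediately, and since $\psi$ lands in $\K \T^\bullet$ it factors through the suboperad $\PLie \simeq \K \T^\bullet$ of $\K \G_{orc}^\bullet$, completing the proof.
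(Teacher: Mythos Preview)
The paper, being an extended abstract, does not include a proof of this proposition (proofs are deferred to the long version~\cite{AGKT19}), so there is nothing to compare against directly. Your argument is the natural direct verification and is correct: you expand $\psi(t_1\circ_\ast t_2)$ using the insertion formula for $\K\T$, expand $\psi(t_1)\circ_\ast\psi(t_2)$ using the standard $\PLie$ grafting (parent of $\ast$ attaches to the root $r_2$, each child of $\ast$ attaches to an arbitrary vertex of $t_2$), and then match the two sums via the bijection $(r_1,r_2,g)\leftrightarrow(f,r)$. The case analysis you give---$r\in V_1\setminus\{\ast\}$ forces $r_1=r$, $r_2=f(p^{r_1})$, $g=f|_{C^{r_1}}$, while $r\in V_2$ forces $r_1=\ast$, $r_2=r$, $g=f$---is exactly what is needed, and the cardinalities visibly agree since $\lvert V_2\rvert\cdot\lvert V_2\rvert^{\lvert N_\ast\rvert}+(\lvert V_1\rvert-1)\cdot\lvert V_2\rvert\cdot\lvert V_2\rvert^{\lvert N_\ast\rvert-1}=\lvert V_2\rvert^{\lvert N_\ast\rvert}(\lvert V_1\rvert+\lvert V_2\rvert-1)$. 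Injectivity is immediate for the reason you state.

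One small point worth making explicit: your description of the $\PLie$ composition should be checked against the paper's realisation of $\PLie$ inside $\K\G_{orc}^\bullet$ via $(t,r)\mapsto(t_r,r)$. With the convention that parent ends are labelled, the loose end at $\ast$ coming from the edge to the parent is the (non-labelled) child end and hence is forced to $r_2$, while the loose ends coming from edges to children are (labelled) parent ends and hence reconnect freely---exactly as you wrote. So your identification of the induced composition with the classical Chapoton--Livernet one is justified, and the proof goes through.
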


A natural question to ask is how to extend this morphism to $\K \G_c$ and $\K \MG_c$. Let us
introduce some notations in order to answer this question. For $g\in \MG_c[V]$, $r\in V$,
and $t\in \T[V]$ a spanning tree of $g$, let $\overrightarrow{g}^{(t,r)}\in \MG_{orc}$ be
the oriented multigraph obtained by labelling the edges of $g$ in $t$ in the same way as the
edges of $t_r$, and by labelling both ends of the edges in $g$ not in $t$. More formally, we
have: $\overrightarrow{g}^{(t,r)}=t_{r}\oplus\iota_{\G}(g\setminus t)$, where $\iota: \K
\MG\rightarrow \K \MG_{or}$ sends a multigraph to the oriented multigraph obtained by
labelling all the edges ends.

Define $\K \Operad_2\subset \K\Operad_1\subset\K \ST$ three subspecies of $\K
\MG_{orc}^{\bullet}$ by
\begin{equation}
    \ST[V]=\left\{
        (\overrightarrow{g}^{(t,r)},r) : g\in \MG_c[V], r\in V \text{ and $t$ a
    spanning tree of $g$}\right\},
\end{equation}
\begin{equation}
    \Operad_1[V] = \left\{\sum_{r\in V} (\overrightarrow{g}^{(t(r),r)},r) : g\in
    \MG_c[V]\text{ and for each $r$, $t(r)$ a spanning tree of $g$}\right\},
\end{equation}
\begin{multline}
    \Operad_2[V]=
    \left\{(\overrightarrow{g}^{(t_1,r)},r)-(\overrightarrow{g}^{(t_2,r)},r) : g\in
    \MG_c[V],r\in V,
    \right. \\ \left.
    \text{ and $t_1$ and $t_2$ two spanning trees of $g$}\right\}.
\end{multline}

\begin{lemma} \label{lemmfond}
The following properties hold
\begin{itemize}
\item $\K \ST$ is a suboperad of $\K \MG_{orc}^{\bullet}$ isomorphic to $\K\MG\times\PLie$,
\item $\K \Operad_1$ is a suboperad of $\K \ST$, 
\item $\K\Operad_2$ is an ideal of $\K\Operad_1$.
\end{itemize}
\end{lemma}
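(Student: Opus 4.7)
My approach is to set up the explicit isomorphism of part 1 first, and then exploit it to reduce parts 2 and 3 to compatibility statements inside the Hadamard product operad $\K \MG \times \PLie$, where compositions split cleanly into a multigraph component and a pre-Lie component. For part 1, I would define $\phi \colon \K \ST \to \K \MG \times \PLie$ on generators by $\phi((\overrightarrow{g}^{(t,r)},r)) = (g \setminus t) \otimes (t,r)$. This is a species bijection because any pair $(g',(t,r))$ with $g' \in \MG[V]$ and $(t,r)$ a rooted tree reconstructs a unique element of $\ST[V]$ via $g = g' \cup t$. To verify that $\phi$ is an operad morphism, I would classify the loose ends created at $\ast$ during a rooted insertion: the unlabelled ones arise exactly as the child-ends of tree edges incident to $\ast$ in $t_r$ and must attach to the root $r_2$ of the second argument, whereas the labelled ones come from parent-ends of tree edges together with both ends of non-tree edges at $\ast$, and may attach to any vertex of $V_2$. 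These two rules match exactly the partial compositions of $\PLie$ (on the oriented spanning tree) and of $\K \MG$ (on the doubly-labelled edges), and the new-root rule matches that of $\PLie$ as well; hence $\phi(x \circ_\ast y) = \phi(x) \circ_\ast \phi(y)$.

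For part 2, I take generators $x = \sum_{r \in V_1} (\overrightarrow{g_1}^{(t_1(r),r)}, r)$ and $y = \sum_{s \in V_2} (\overrightarrow{g_2}^{(t_2(s),s)}, s)$ of $\Operad_1$. Transporting by $\phi$, one has $\phi(x) \circ_\ast \phi(y) = \sum_{r,s} [(g_1 \setminus t_1(r)) \circ_\ast^{\MG} (g_2 \setminus t_2(s))] \otimes [(t_1(r),r) \circ_\ast^{\PLie} (t_2(s),s)]$. I would then re-index by the new root $R$ (equal to $r$ when $r \ne \ast$, and to $s$ otherwise): for each such $R$, the pre-Lie composition yields a sum of rooted spanning trees $T(R)$ of the composed multigraphs produced by the graph insertion, and the collected contribution takes the form $\sum_R (h \setminus T(R)) \otimes (T(R),R)$, which lies in $\phi(\Operad_1)$. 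For part 3, a generator of $\Operad_2$ is a difference $d = (\overrightarrow{g}^{(t,r)},r) - (\overrightarrow{g}^{(t',r)},r)$ with fixed $g$ and $r$; the crucial point is that the full graph insertion $g \circ_\ast^{\MG} g_2$ in $\K \MG$ does not depend on any choice of spanning tree of $g$, so the terms in $\phi(d) \circ_\ast \phi(y)$ pair up by their underlying composed multigraph and new root, yielding differences of the form $(h \setminus T_1(R)) \otimes (T_1(R),R) - (h \setminus T_2(R)) \otimes (T_2(R),R)$, each a generator of $\phi(\Operad_2)$. A symmetric argument handles composition on the right.

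The principal obstacle is the matching step in part 3: verifying that the summands of the graph insertions arising from the two different spanning trees $t$ and $t'$ of $g$ can be put in bijection so that the underlying composed multigraphs agree and the spanning trees differ only by the original choice. This requires explicit edge bookkeeping through the pre-Lie insertion of the tree parts interacting with the graph insertion of the non-tree parts, so that after combining the two factors the resulting difference lies inside the generating set of $\Operad_2$ rather than producing stray terms outside.
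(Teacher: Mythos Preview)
The paper is an extended abstract and gives no proof of this lemma; the authors defer all proofs to the long version~\cite{AGKT19}. So there is nothing in the present text to compare your argument against, and I can only assess your proposal on its own merits.

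Your treatment of part~1 is sound. The map $\phi\bigl((\overrightarrow{g}^{(t,r)},r)\bigr)=(g\setminus t)\otimes(t,r)$ is a species bijection for the reason you give, and your classification of loose ends at~$\ast$ correctly matches the rooted-insertion rule to the componentwise composition in $\K\MG\times\PLie$. You should state explicitly that this also shows $\K\ST$ is closed under rooted insertion, but that is immediate from the morphism property.

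Parts~2 and~3 are where the proposal is incomplete, and the gap is more serious than your ``principal obstacle'' paragraph suggests. The sentence ``the collected contribution takes the form $\sum_R (h\setminus T(R))\otimes (T(R),R)$'' is not justified: the composition $\phi(x)\circ_\ast\phi(y)$ expands as a double sum over choices in both the $\MG$ and the $\PLie$ factors, and you have not explained how to group these terms into generators of $\phi(\Operad_1)$, each of which requires a \emph{fixed} multigraph $h$ and a \emph{single} tree $T(R)$ per root. For part~3 the difficulty is sharper: $\K\Operad_2$ is \emph{not} an ideal of $\K\ST$ itself (try $g$ the triangle on $\{a,b,\ast\}$ with $r=a$, two spanning trees differing in the parent of~$\ast$, composed with a single element of $\ST$ on $\{c,d\}$; the cross terms land on different underlying multigraphs). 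Hence the sum over all roots $s\in V_2$ in $y\in\Operad_1$ is essential for the cancellation, and your matching argument must use it explicitly. Concretely, the term where the non-tree edge of $g$ at~$\ast$ reattaches to some $v\in V_2$ under spanning tree $t_1$ must be matched with the term where $s=v$ appears as the root of $y$ under spanning tree $t_2$; this cross-matching between the root sum in $y$ and the reconnection choices in the $\MG$ factor is the mechanism you need to make precise, and it is not yet visible in your write-up.
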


We can see $\PLie$ as a suboperad of $\ST$ by the monomorphism $(t,r)\mapsto (t_r,r)$. The
image of the operad morphism $\psi$ of Proposition~\ref{prelie} is then $\K\Operad_1\cap
\PLie$ and we have that $\K\Operad_2\cap \PLie = \{0\}$ and hence $\K\Operad_1\cap
\PLie/\K\Operad_2\cap \PLie = \K\Operad_1\cap \PLie$.

\begin{proposition}
The operad isomorphism $\psi: \K \T \to \PLie$
extends into an operad isomorphism $\psi: \K \MG_c \to \K\Operad_1/\K\Operad_2$ satisfying,
for any $g \in \MG_c[V]$,
\begin{equation}
    \psi(g) = \sum_{r\in V}\overrightarrow{g}^{(t(r),r)},
\end{equation}
where for each $r\in V$, $t(r)$ is a spanning tree of $g$. Furthermore, this isomorphism
restricts itself to an isomorphism $\K\G_C \to \K\Operad_1\cap\K\G_c/\K\Operad_2\cap\K\G_c$.
\end{proposition}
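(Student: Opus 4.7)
The plan is to verify four points: that $\psi$ is well-defined as a map into the quotient $\K\Operad_1/\K\Operad_2$; that it is a morphism of species; that it is bijective; and that it respects partial compositions. The restriction to graphs will then follow immediately.

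For well-definedness, by construction the element $\sum_{r\in V}(\overrightarrow{g}^{(t(r),r)},r)$ lies in $\K\Operad_1[V]$. If $t'(r)$ is another spanning tree of $g$, then $(\overrightarrow{g}^{(t(r),r)},r) - (\overrightarrow{g}^{(t'(r),r)},r)$ is a defining generator of $\K\Operad_2[V]$, so the class of $\psi(g)$ modulo $\K\Operad_2$ does not depend on the choices of spanning trees. Compatibility with bijections is immediate since $\overrightarrow{g}^{(t,r)}$ is built canonically from $g$, $t$, and $r$. Surjectivity is also direct from the definition of $\Operad_1$: every generator of $\K\Operad_1[V]$ has the form $\psi(g)$ for some $g \in \MG_c[V]$. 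For injectivity, the underlying unoriented multigraph of any summand $\overrightarrow{g}^{(t(r),r)}$ recovers $g$; since each defining generator of $\K\Operad_2$ involves a single fixed underlying multigraph, the class $\psi(g)\bmod\K\Operad_2$ determines $g$.

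The main step is to show $\psi(g_1 \circ_\ast g_2) \equiv \psi(g_1) \circ_\ast \psi(g_2) \pmod{\K\Operad_2}$. Unfolding the right-hand side using the rooted insertion in $\K\MG_{orc}^\bullet$, we obtain a sum indexed by roots $r_1\in V_1$ and $r_2\in V_2$, by spanning trees $t_1$ of $g_1$ and $t_2$ of $g_2$, and by assignments of the labelled loose ends produced at $\ast$ to vertices of $V_2$; the non-labelled loose ends, coming from the child ends at $\ast$ of edges of $t_1$, are forced onto $r_2$. On the left-hand side, $\psi(g_1\circ_\ast g_2) = \sum_{h}\sum_{r}\overrightarrow{h}^{(t(r),r)}$ where $h$ ranges over the support of $g_1\circ_\ast g_2$, $r$ over the vertices of $h$, and $t(r)$ over a spanning tree of $h$ rooted at $r$. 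The hard part is to construct a combinatorial identification, up to $\K\Operad_2$, between these two indexing sets: given a rooted spanning tree $(t(r),r)$ of $h$, one must canonically extract a spanning tree of $g_1$, a spanning tree of $g_2$, and a distinguished edge linking the component coming from $g_2$ to the rest, in such a way that the labelled ends of $\overrightarrow{h}^{(t(r),r)}$ precisely match the free-end assignments produced by the rooted insertion. The remaining freedom in these extraction choices is exactly what the quotient by $\K\Operad_2$ absorbs, and this matching is the main technical obstacle.

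The restriction to graphs is then immediate: $\K\G_c$ is stable under insertion in $\K\MG_c$ since no loops nor multiple edges are created when inserting simple graphs into simple graphs, and all spanning trees involved are themselves simple graphs; therefore $\psi$ restricts to an isomorphism between $\K\G_c$ and $\K\Operad_1\cap\K\G_c/\K\Operad_2\cap\K\G_c$.
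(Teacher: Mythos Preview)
The paper is an extended abstract and gives no proof of this proposition; all proofs are deferred to the long version. There is therefore no argument in the paper to compare your approach against.

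Your outline is structurally sound: well-definedness modulo $\K\Operad_2$, bijectivity, and naturality in $V$ are handled adequately. However, the crux of the proposition---compatibility with the partial compositions---is not established. You correctly identify what must be shown and even label it ``the main technical obstacle'', but you then stop. What remains is genuine work: one has to verify that every term produced by the rooted insertion $(\overrightarrow{g_1}^{(t_1,r_1)},r_1)\circ_\ast(\overrightarrow{g_2}^{(t_2,r_2)},r_2)$ is again of the form $(\overrightarrow{h}^{(t,r)},r)$ for an explicit spanning tree $t$ of some $h$ in the support of $g_1\circ_\ast g_2$; check that the induced orientation on $t$ agrees with rooting at the new root (the cases $r_1\neq\ast$ and $r_1=\ast$ behave differently, since in the first case exactly one loose end is unlabelled while in the second all are labelled); and finally count that, for each fixed root $r$, the multiset of underlying multigraphs $h$ appearing on the two sides agrees with multiplicities, so that the difference is a sum of elements of $\K\Operad_2$. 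None of these verifications is carried out in your text. You should also invoke Lemma~\ref{lemmfond} explicitly at the outset: without knowing that $\K\Operad_1$ is a suboperad and $\K\Operad_2$ an ideal, the quotient $\K\Operad_1/\K\Operad_2$ is not known to be an operad, and the claim that $\psi$ is an operad morphism would not even be well-posed.
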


The last results are summarized in the following commutative diagram of operad morphisms.
\begin{equation}
\begin{tikzcd}
    \K \T \arrow[r, "\sim"] \arrow[d, hook]
    & \PLie\cap\K\Operad_1/\K\Operad_2 \arrow[r, equal] \arrow[d, hook]
    & \PLie\cap\K\Operad_1 \arrow[d,hook] \arrow[r, hook]
    & \PLie \arrow[d,hook]\\
    \K \G_c \arrow[r, "\sim"] \arrow[d, hook]
    & \K\Operad_1\cap\K\G_{orc}^{\bullet}/\K\Operad_2\cap\K\G_{orc}^{\bullet} \arrow[d, hook]
    & \K\G_{orc}^{\bullet}\cap\K\Operad_1 \arrow[l, two heads] \arrow[r, hook] \arrow[d, hook]
    & \K\G_{orc}^{\bullet}\cap\K \ST \arrow[d, hook] \\
    \K\MG_c \arrow[r, "\sim"]
    & \K\Operad_1/\Operad_2
    & \K\Operad_1 \arrow[l, two heads] \arrow[r, hook]
    & \K\MG\times\PLie
\end{tikzcd}
\end{equation}

\section{Finitely generated suboperads} \label{sec:suboperads}
Let us now focus on finitely generated suboperads of $\K \G$.  First remark that the
suboperad of $\K \G$ generated by
\begin{math}
    \left\{
    \begin{tikzpicture}[Centering,scale=.6]
        \node[NodeGraph](a)at(0,0){$a$};
        \node[NodeGraph](b)at(1,0){$b$};
    \end{tikzpicture}
    \right\}
\end{math}
is isomorphic to the commutative operad $\K \Com$. Indeed,
\begin{equation}
    \begin{tikzpicture}[Centering,scale=.7]
        \node[NodeGraph](a)at(0,0){$a$};
        \node[NodeGraph](s)at(1,0){$\ast$};
    \end{tikzpicture}
    \enspace \circ_\ast \enspace
    \begin{tikzpicture}[Centering,scale=.7]
        \node[NodeGraph](b)at(0,0){$b$};
        \node[NodeGraph](c)at(1,0){$c$};
    \end{tikzpicture}
    \enspace = \enspace
    \begin{tikzpicture}[Centering,scale=.7]
        \node[NodeGraph](a)at(0,0){$a$};
        \node[NodeGraph](b)at(1,0){$b$};
        \node[NodeGraph](c)at(2,0){$c$};
    \end{tikzpicture}
    \enspace = \enspace
    \begin{tikzpicture}[Centering,scale=.7]
        \node[NodeGraph](s)at(0,0){$\ast$};
        \node[NodeGraph](c)at(1,0){$c$};
    \end{tikzpicture}
    \enspace \circ_\ast \enspace
    \begin{tikzpicture}[Centering,scale=.7]
        \node[NodeGraph](a)at(0,0){$a$};
        \node[NodeGraph](b)at(1,0){$b$};
    \end{tikzpicture}
 \end{equation}

Recall that the set operad $\ComMag$~\cite{BL11} is the free set operad generated by one
binary and symmetric element. More formally, $\ComMag[V]$ is the set of all nonplanar binary
trees with set of leaves equal to $V$. Let $s$ be the connected set species defined by
$|s[V]| = 1$ if $|V|=2$, $|s[V]|=0$ otherwise. The action of transposition on the sole
element of $s[\{a,b\}]$ is trivial. Then $\K \ComMag = \FreeOp_{\K s}$.
\begin{proposition} \label{commag}
    The suboperad of $\K \G$ generated by
    \begin{math}
        \left\{
        \begin{tikzpicture}[Centering,scale=.6]
            \node[NodeGraph](a)at(0,0){$a$};
            \node[NodeGraph](b)at(1,0){$b$};
            \draw[EdgeGraph](a)--(b);
        \end{tikzpicture}
        \right\}
    \end{math}
    is isomorphic to $\K \ComMag$.
\end{proposition}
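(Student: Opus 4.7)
The plan is to invoke the universal property of $\K\ComMag$ to produce a candidate operad morphism, then reduce its injectivity to a freeness assertion inside $\PLie$ via the monomorphism $\psi$ of Proposition~\ref{prelie}. Since the edge graph $\Segment{a}{b}$ is invariant under the transposition of its two labels, sending the unique generator $\mu$ of $\ComMag$ to this edge extends, by freeness, to a unique operad morphism $\phi : \K\ComMag \to \K\G$ whose image is tautologically the suboperad in question. The statement therefore reduces to showing that $\phi$ is injective.

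A short induction on arity, using that insertion preserves connectedness and that the edge count is additive under partial composition, shows that $\phi$ actually lands in the suboperad $\K\T$ of tree graphs: an arity-$n$ element in the image is a linear combination of connected graphs on $n$ vertices with exactly $n-1$ edges. I would then compose with the operad monomorphism $\psi : \K\T \to \PLie$ of Proposition~\ref{prelie}. A direct computation using $\psi(\Segment{a}{b}) = (\Segment{a}{b}, a) + (\Segment{a}{b}, b)$ yields
\begin{equation*}
(\psi\circ\phi)(\mu) \;=\; a \triangleleft b + b \triangleleft a,
\end{equation*}
the symmetrization of the pre-Lie product. Since $\psi$ is injective, injectivity of $\phi$ is equivalent to injectivity of $\psi\circ\phi$, that is, to the assertion that the suboperad of $\PLie$ generated by the symmetrized pre-Lie product is free (isomorphic to $\K\ComMag$).

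The heart of the argument, and the main obstacle, is therefore proving this freeness statement inside $\PLie$. Using the rooted-tree presentation of $\PLie$, the recursion $(\psi\circ\phi)(\mu(t_1,t_2)) = (a\triangleleft b + b\triangleleft a) \circ \bigl((\psi\circ\phi)(t_1),(\psi\circ\phi)(t_2)\bigr)$ expresses $(\psi\circ\phi)(t)$, for any nonplanar binary tree $t$ on $V$, as an explicit nonnegative integer combination of rooted labeled trees on $V$; the coefficient on a given rooted tree counts bijections between the internal nodes of $t$ and the edges of the underlying unrooted tree which respect both the induced bipartitions of $V$ and the rooting. Linear independence of these images across all nonplanar binary trees on $V$ is the claim to prove, and would be established by induction on $|V|$, recovering at each step the root bipartition of $t=\mu(t_1,t_2)$ from the structure of the image and invoking the inductive hypothesis on the factors. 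The delicate bookkeeping arises because distinct binary trees share many rooted labeled trees in their images with nontrivial multiplicities, so the recovery must be carried out at the level of full coefficients and not merely supports.
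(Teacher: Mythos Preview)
Your reduction is exactly the paper's: push the question through the monomorphism $\psi:\K\T\to\PLie$ of Proposition~\ref{prelie}, so that the suboperad generated by the edge becomes the suboperad of $\PLie$ generated by the symmetrized pre-Lie product $a\triangleleft b+b\triangleleft a$, and the statement becomes the assertion that this suboperad is free on one commutative binary generator.

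The difference is in how that last assertion is handled. The paper does not prove it; it cites~\cite{BL11} (Bergeron--Loday, \emph{The symmetric operation in a free pre-Lie algebra is magmatic}), where precisely this freeness result is established. You instead sketch a direct inductive argument on $|V|$, proposing to recover the top bipartition of a nonplanar binary tree from the rooted-tree expansion of its image. This is a reasonable strategy, but as you yourself note, the supports of the images overlap heavily and the argument must track full multiplicities; you stop short of actually carrying out this step. So the proposal has a genuine gap at exactly the point where the paper defers to the literature: the linear independence of the images $(\psi\circ\phi)(t)$ over nonplanar binary trees $t$ is asserted with a plan but not proved.

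In short, your framework matches the paper's, and everything up to the freeness-in-$\PLie$ claim is fine. To close the argument you should either complete the induction (this is nontrivial and is the content of~\cite{BL11}) or, as the paper does, invoke that reference.
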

\begin{proof}
We know from Proposition~\ref{prelie} that the operad of the statement is isomorphic to the
suboperad of $\PLie$ generated by
\begin{equation}
    \left\{
    \begin{tikzpicture}[Centering,scale=.6]
        \node[RootGraph](a)at(0,0){$a$};
        \node[NodeGraph](b)at(0,-1){$b$};
        \draw[EdgeGraph](a)--(b);
    \end{tikzpicture}
    +
    \begin{tikzpicture}[Centering,scale=.6]
        \node[RootGraph](a)at(0,0){$b$};
        \node[NodeGraph](b)at(0,-1){$a$};
        \draw[EdgeGraph](a)--(b);
    \end{tikzpicture}
    \right\}
\end{equation}
Then~\cite{BL11} gives us that this suboperad is isomorphic to $\K \ComMag$. This concludes
the proof
\end{proof}

Now the fact that we can see both $\K \Com$ and $\K \ComMag$ as suboperads of $\K \G$ gives
us natural way to define the smallest operad containing these two as suboperads. Let $\SP$
be the suboperad of $\K \G$ generated by
\begin{math}
    \left\{
    \begin{tikzpicture}[Centering,scale=.6]
        \node[NodeGraph](a)at(0,0){$a$};
        \node[NodeGraph](b)at(1,0){$b$};
    \end{tikzpicture},
    \begin{tikzpicture}[Centering,scale=.6]
        \node[NodeGraph](a)at(0,0){$a$};
        \node[NodeGraph](b)at(1,0){$b$};
        \draw[EdgeGraph](a)--(b);
    \end{tikzpicture}
    \right\}
\end{math}
This operad has some nice properties.

\begin{proposition}
    The operad $\SP$ is isomorphic to the operad $\Ope(G, R)$ where $G$ is the subspecies of
    $\K\G$ generated by $\{\Points{a}{b}, \Segment{a}{b}\}$ and $R$ is the subspecies of
    $\FreeOp_G$ generated by
     \begin{subequations}
    \begin{equation} \label{equ:rel_1}
        \Points{c}{\ast} \circ^{\xi}_\ast \Points{a}{b}
        \enspace - \enspace
        \Points{a}{\ast} \circ^{\xi}_\ast \Points{b}{c},
    \end{equation}
    \begin{center}
        and
    \end{center}
    \begin{equation} \label{equ:rel_2}
        \Segment{a}{\ast} \circ^{\xi}_\ast \Points{b}{c}
        \enspace - \enspace
        \Points{c}{\ast} \circ^{\xi}_\ast \Segment{a}{b}
        \enspace - \enspace
        \Points{b}{\ast} \circ^{\xi}_\ast \Segment{a}{c}.
    \end{equation}
    \end{subequations}
    Therefore, $\SP$ is a binary and quadratic operad.
\end{proposition}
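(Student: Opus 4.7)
The plan is to proceed in two steps: first construct a surjective operad morphism $\phi : \Ope(G,R) \to \SP$ by verifying the two defining relations, then establish its injectivity via a normal-form argument.

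I would begin by checking both relations directly in the insertion operad. For~\eqref{equ:rel_1}, each of $\Points{c}{\ast} \circ_\ast \Points{a}{b}$ and $\Points{a}{\ast} \circ_\ast \Points{b}{c}$ equals the edgeless graph on $\{a,b,c\}$, because $\ast$ has no neighbours and so the insertion reduces to a plain disjoint union; the two terms cancel. For~\eqref{equ:rel_2}, the composition $\Segment{a}{\ast} \circ_\ast \Points{b}{c}$ reattaches the loose end of the edge incident to $\ast$ to either $b$ or $c$, giving the sum of the two one-edge graphs $\Segment{a}{b}$ plus isolated $c$ and $\Segment{a}{c}$ plus isolated $b$. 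On the other hand, both $\Points{c}{\ast} \circ_\ast \Segment{a}{b}$ and $\Points{b}{\ast} \circ_\ast \Segment{a}{c}$ are plain disjoint unions equal to exactly these same two graphs, so the whole expression vanishes in $\SP$. The universal property of $\FreeOp_G$ then yields the surjective morphism $\phi$.

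Next I would set up a rewriting system on $\FreeOp_G$ by orienting~\eqref{equ:rel_2} so as to push every $\Segment$ node sitting immediately above a $\Points$ node below it, at the cost of replacing that subtree by a $\K$-linear combination of trees in which the $\Points$ now sits on top. Relation~\eqref{equ:rel_1} is used to normalise consecutive $\Points$ nodes into a canonical shape, reflecting the fact that the suboperad of $\K\G$ generated by $\Points{a}{b}$ is $\K\Com$. Iterating both rules, every element of $\Ope(G,R)[V]$ reduces to a linear combination of \emph{decorated forests}: a set partition $\pi$ of $V$ together with a non-planar binary tree on each block $B \in \pi$, i.e.\ an element of $\ComMag[B]$. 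This yields the upper bound $\dim \Ope(G,R)[V] \le \sum_\pi \prod_{B \in \pi} |\ComMag[B]|$.

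To match this upper bound in $\SP$, I would show that the images under $\phi$ of distinct decorated forests are linearly independent in $\K \G$. The key point is that a decorated forest $(\pi, (\tau_B)_{B\in\pi})$ maps to a linear combination of graphs whose connected components have vertex sets exactly the blocks of $\pi$: this uses the embedding $\K \Com \hookrightarrow \SP$ for the top layer and the embedding $\K \ComMag \hookrightarrow \SP$ of Proposition~\ref{commag} block by block. Distinct partitions therefore give supports in $\K \G$ that are disjoint, and within a fixed partition independence follows from Proposition~\ref{commag}. The two bounds then force $\phi$ to be an isomorphism, and binarity and quadraticity are immediate from the presentation. The main obstacle is the normal-form step: proving that the rewriting is confluent and terminates, and that its canonical forms are exactly the decorated forests above. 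The delicate point is that applying~\eqref{equ:rel_2} can create new $\Segment$-above-$\Points$ configurations further down in the tree, so a careful termination statistic has to be designed, for instance the sum over all $\Segment$ nodes of the number of $\Points$ ancestors strictly above them.
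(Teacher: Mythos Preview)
The paper is an extended abstract and does not include a proof of this proposition; it explicitly defers all proofs to the long version~\cite{AGKT19}. So there is no in-paper argument to compare against, and your outline has to be judged on its own.

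Your strategy is the standard one for presentations and is sound. A few comments.

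\textbf{Confluence is unnecessary.} You identify confluence as the ``main obstacle'', but your argument as written never uses it. Termination of the rewriting already shows that the decorated forests \emph{span} $\Ope(G,R)[V]$, which gives the upper bound on the dimension. The lower bound comes from the linear independence of the images $\phi(\pi,(\tau_B))$ in $\K\G[V]$, and together with surjectivity of $\phi$ this forces equality of dimensions. Confluence would only be needed if you wanted to know \emph{a priori} that distinct decorated forests are distinct in $\Ope(G,R)$, but you get that for free once their images are independent.

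\textbf{The termination statistic.} Your proposed statistic, the sum over $\Segment$ nodes of the number of $\Points$ ancestors, \emph{increases} under the rewrite~\eqref{equ:rel_2}, not decreases: after the swap the new $\Segment$ node has gained the new $\Points$ above it. This is still fine because the statistic is bounded above by the product of the numbers of $\Segment$ and $\Points$ nodes, and those numbers are preserved in each summand, so the rewriting terminates. Just phrase it as a bounded increasing quantity rather than a decreasing one.

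\textbf{The independence step.} Your argument that the support of $\phi(\pi,(\tau_B))$ consists exactly of graphs whose connected-component partition equals $\pi$ is correct and is the key point: insertion at $\ast$ in $\Points{v}{\ast}$ is a pure disjoint union because $\ast$ is isolated, and by Proposition~\ref{commag} each $\phi(\tau_B)$ is a linear combination of trees on $B$, hence connected graphs with vertex set exactly $B$. Independence across partitions is then immediate, and within a fixed partition it follows from the injectivity of Proposition~\ref{commag} together with the injectivity of the disjoint-union map $\bigotimes_{B\in\pi}\K\T[B]\to\K\G[V]$.
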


For the readers familiar with Koszulity (see~\cite{LV12}), remark that $\SP$ is a Koszul
operad.

\begin{proposition}
    The operad $\SP$ admits as Koszul dual the operad $\SP^!$ which is isomorphic to the
    operad $\Ope(G^{\vee}, R)$ where $G$ is the subspecies of $\K\G$ generated by
    $\{\Points{a}{b}^{\vee}, \Segment{a}{b}^{\vee}\}$ and $R$ is the subspecies of
    $\FreeOp_{G^\vee}$ generated by
    \begin{subequations}
    \begin{equation} \label{equ:rel_dual_1}
        \Segment{a}{\ast}^{\vee} \circ^{\xi}_\ast \Segment{b}{c}^{\vee},
    \end{equation}
    \begin{equation} \label{equ:rel_dual_2}
        \Points{a}{\ast}^{\vee} \circ^{\xi}_\ast \Segment{b}{c}^{\vee}
        \enspace + \enspace
        \Segment{c}{\ast}^{\vee} \circ^{\xi}_\ast \Points{a}{b}^{\vee}
        \enspace + \enspace
        \Segment{b}{\ast}^{\vee} \circ^{\xi}_\ast \Points{a}{c}^{\vee},
    \end{equation}
    \begin{equation} \label{equ:rel_dual_3}
        \Points{a}{\ast}^{\vee} \circ^{\xi}_\ast \Points{b}{c}^{\vee}
        +
        \Points{c}{\ast}^{\vee} \circ^{\xi}_\ast \Points{a}{b}^{\vee}
        +
        \Points{b}{\ast}^{\vee} \circ^{\xi}_\ast \Points{c}{a}^{\vee}.
    \end{equation}
    \end{subequations}
\end{proposition}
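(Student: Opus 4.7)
The plan is to verify that $\SP^! \cong \Ope(G^\vee, R')$ for $R'$ the subspecies generated by the three displayed relations, by computing $R^\perp$ directly from the pairing in Definition of Koszul duality. Since $\SP$ is binary quadratic, it is enough to work in arity $3$, that is, to compute $R^\perp[V]$ for $V = \{a,b,c\}$, and then invoke the general fact that the Koszul dual is $\Ope(G^\vee, R^\perp)$.

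First, I would fix bases. A natural basis of $\FreeOp_G^{(2)}[V]$ is indexed by triples $(x; \gamma_1, \gamma_2) \in V \times \{\Points{a}{b}, \Segment{a}{b}\}^2$, where $x$ is the leaf adjacent to the root, $\gamma_1$ labels the root and $\gamma_2$ labels the other internal node; this yields a space of dimension $12$. Correspondingly, $\FreeOp_{G^\vee}^{(2)}[V]$ has the dual basis of dimension $12$, on which the $S_3$-action carries the sign twist from the definition of $S^\vee$ (in particular, since both $\Points{a}{b}$ and $\Segment{a}{b}$ are $S_2$-invariant, their duals are $S_2$-anti-invariant, which is the reason the dual relations look slightly asymmetric).

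Second, I would compute $R[V]$ as the $\K S_3$-submodule generated by~\eqref{equ:rel_1} and~\eqref{equ:rel_2}. The orbit of~\eqref{equ:rel_1} lies entirely in the four-dimensional Points-Points subspace and reduces the three Points-Points basis elements to a single common value (this is the $\Com$ relation, already used implicitly in the discussion preceding Proposition~\ref{commag}), contributing dimension $2$. The orbit of~\eqref{equ:rel_2} lives in the Points-Segment / Segment-Points mixed subspace; direct enumeration of its $S_3$-images gives its dimension and so $\dim R[V]$, hence $\dim R^\perp[V] = 12 - \dim R[V]$.

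Third, for each of the three candidate dual relations and each of the (finitely many) generators of $R[V]$, I would verify orthogonality by direct application of the pairing formula
\begin{equation}
    \langle f_1\circ_\ast f_2, x_1\circ_\ast x_2\rangle = f_1(x_1)\, f_2(x_2),
\end{equation}
extended $S_3$-equivariantly (up to the sign twist on the $G^\vee$ side). Each check reduces to matching a single pair of leaf configurations and reading off the coefficient. Then, a dimension count of the $\K S_3$-orbit of \eqref{equ:rel_dual_1}, \eqref{equ:rel_dual_2}, \eqref{equ:rel_dual_3} would confirm it equals $\dim R^\perp[V]$, forcing the orbit to span $R^\perp[V]$ and yielding the claimed presentation of $\SP^!$.

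The main obstacle is bookkeeping rather than conceptual difficulty: one must track the sign twist in $G^\vee$ consistently through all $S_3$-translates of the relations, and make sure that the independent orbit representatives of \eqref{equ:rel_1} and \eqref{equ:rel_2} (resp.\ of \eqref{equ:rel_dual_1}--\eqref{equ:rel_dual_3}) are correctly counted when computing $\dim R[V]$ (resp.\ $\dim R^\perp[V]$). Once the bases are fixed, everything reduces to a finite explicit verification.
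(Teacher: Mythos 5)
Your proposal is correct and follows essentially the same route as the paper's own sketch: compute $\dim R[\{a,b,c\}]$ inside the $12$-dimensional space $\FreeOp_G^{(2)}[\{a,b,c\}]$ (the paper finds $5$, hence $\dim R^\perp = 7$), check by direct evaluation of the pairing that the three proposed dual relations are orthogonal to $R$, and conclude by matching the dimension of their $S_3$-span ($3+3+1=7$) with $\dim R^\perp$. Your remark about the sign twist making the duals of the symmetric generators $S_2$-anti-invariant is the right bookkeeping point and is consistent with the paper's computation.
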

\begin{proof}[Sketch of proof]
Let us respectively denote by $r_1$, $r_2$, $r'_1$, $r'_2$, and $r'_3$ the
elements~\eqref{equ:rel_1}, \eqref{equ:rel_2}, \eqref{equ:rel_dual_1},
\eqref{equ:rel_dual_2}, and~\eqref{equ:rel_dual_3}.  Denote by $I$ the operad ideal
generated by~$r_1$ and~$r_2$. Then as a vector space, $I[[\{a,b,c\}]]$ is the linear span of
the set
\begin{equation}
    \{r_1, r_1 \cdot (ab), r_2,r_2 \cdot (abc), r_2 \cdot (acb)\},
\end{equation}
where $\cdot$ is the action of the symmetric group, e.g $r_1\cdot (ab) =
\FreeOp_{G}[(ab)](r_1)$.  This space is a sub-space of dimension $5$ of
$\FreeOp_G[\{a,b,c\}$, which is of dimension $12$. Hence, since as a vector space
\begin{equation}
    \FreeOp_{G^\vee}[\{a,b,c\}]
    \cong \FreeOp_{G^*}[\{a,b,c\}]\cong
    \FreeOp_{G}[\{a,b,c\}],
\end{equation}
$I^{\bot}[\{a,b,c\}]$ must be of dimension $7$.

Denote by $J$ the ideal generated by $r_1'$, $r_2'$ and $r_3'$. Then as a vector space
$J[\{a,b,c\}]$ is the linear span of the set
\begin{equation}
    \{ r_1', r_1'\cdot (ab), r_1'\cdot (ac),r_2', r_2'\cdot (abc),r_2'\cdot (acb), r_3'\}.
\end{equation}
This space is of dimension 7. Verifying that for any $f\in J[\{a,b,c\}]$ and $x\in
I[\{a,b,c\}]$ we have $<f,x>=0$ concludes this proof.
\end{proof}

\begin{proposition}
    The Hilbert series of $\SP^{!}$ is 
    \begin{equation}
        \mathcal{H}_{\SP^!}(x) = \dfrac{(1-\log(1-x))^2-1}{2}.
    \end{equation}
\end{proposition}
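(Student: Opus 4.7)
My plan is to identify a normal form for elements of $\SP^!$ using the three defining relations and count the result.

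Relation~\eqref{equ:rel_dual_3} is the Jacobi identity on the antisymmetric generator $\Points{a}{b}^{\vee}$: since $\Com^{!} \cong \mathbf{Lie}$, the suboperad of $\SP^!$ generated by $\Points{a}{b}^{\vee}$ alone is isomorphic to $\mathbf{Lie}$, whose Hilbert series is $\mathcal{H}_{\mathbf{Lie}}(x) = -\log(1-x)$. Relation~\eqref{equ:rel_dual_1} forbids two $\Segment{a}{b}^{\vee}$-nodes along a single root-to-leaf path, and relation~\eqref{equ:rel_dual_2}, oriented from left to right, rewrites any $\Points{a}{b}^{\vee}$ placed above a $\Segment{a}{b}^{\vee}$ as a linear combination of $\Segment{a}{b}^{\vee}$ placed above $\Points{a}{b}^{\vee}$. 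Combining these three observations, every element of $\SP^![V]$ admits a unique normal form which is either a $\mathbf{Lie}$-tree on $V$, or of the shape $\Segment{a}{b}^{\vee}(T_1, T_2)$ at the root with $T_i \in \mathbf{Lie}[V_i]$ for some unordered bipartition $\{V_1, V_2\}$ of $V$ into nonempty blocks (the two orderings being identified by the antisymmetry of $\Segment{a}{b}^{\vee}$).

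At the level of species this decomposition gives the Hilbert series
\begin{equation*}
    \mathcal{H}_{\SP^!}(x) = \mathcal{H}_{\mathbf{Lie}}(x) + \tfrac{1}{2}\mathcal{H}_{\mathbf{Lie}}(x)^2 = -\log(1-x) + \tfrac{(\log(1-x))^2}{2} = \tfrac{(1-\log(1-x))^2 - 1}{2},
\end{equation*}
which is the announced formula. The main obstacle is justifying the confluence of the rewriting procedure above, so that the proposed normal forms are linearly independent; this is ensured by the Koszulity of $\SP$ (remarked on just above) through the Koszul duality identity $\mathcal{H}_\SP(-\mathcal{H}_{\SP^!}(-t)) = t$, combined with an independent computation of $\mathcal{H}_\SP$ via the dual distributive law $\SP \cong \Com \circ \ComMag$ (giving $\mathcal{H}_\SP(x)=e^{1-\sqrt{1-2x}}-1$), and is verified directly by a Diamond-Lemma argument at arity $4$ in the long version~\cite{AGKT19}.
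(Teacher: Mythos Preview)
The paper is an extended abstract and gives no proof of this proposition; all proofs are deferred to the long version~\cite{AGKT19}. Your argument is correct and is the natural one.

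Your rewriting analysis is sound. Grading $\FreeOp_{G^\vee}$ by the number of $\Segment{}{}^{\vee}$-nodes shows that the ideal generated by~\eqref{equ:rel_dual_1}--\eqref{equ:rel_dual_3} is homogeneous for this grading; hence the suboperad on $\Points{}{}^{\vee}$ alone is cut out exactly by~\eqref{equ:rel_dual_3} and is $\mathbf{Lie}$, and relations~\eqref{equ:rel_dual_1} and~\eqref{equ:rel_dual_2} then force any surviving $\Segment{}{}^{\vee}$ to sit at the root. This yields your spanning set and the upper bound $\dim \SP^![[n]] \leq (n-1)! + \tfrac{1}{2}\sum_{k=1}^{n-1}\binom{n}{k}(k-1)!(n-k-1)!$, which matches the coefficients of the stated series.

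For the matching lower bound, one remark on the logic: once you grant that $\SP$ is Koszul (asserted in the paper) together with $\mathcal{H}_\SP(x)=e^{1-\sqrt{1-2x}}-1$, the identity $\mathcal{H}_\SP(-\mathcal{H}_{\SP^!}(-t))=t$ already determines $\mathcal{H}_{\SP^!}$ uniquely and gives the formula outright, so your normal-form computation becomes logically superfluous (though it does exhibit an explicit basis, which is independently valuable). Conversely, if you prefer not to lean on the Koszulity assertion, the Diamond-lemma verification you allude to produces a quadratic Gr\"obner basis and hence delivers both the linear independence of your normal forms and Koszulity in one stroke. Either packaging is acceptable; the paper's emphasis on the Koszul identity suggests the authors intend the former route.
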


The first dimensions $\dim \SP^![[n]]$ for $n\geq 1$ are
\begin{equation}
    1, 2, 5, 17, 74, 394, 2484, 18108, 149904.
\end{equation}
This is sequence~\OEIS{A000774} of~\cite{Slo}. This sequence is in particular linked to some
pattern avoiding signed permutations and mesh patterns.

Before ending this section let us mention the suboperad $\LP$ of $\K \MG$ generated by
\begin{equation}
    \left\{
    \begin{tikzpicture}[Centering,scale=.6]
        \tikzset{every loop/.style={}}
        \node[NodeGraph](a)at(0,0){$a$};
        \draw[EdgeGraph](a)edge[loop](a);
    \end{tikzpicture},
    \begin{tikzpicture}[Centering,scale=.6]
        \node[NodeGraph](a)at(0,0){$a$};
        \node[NodeGraph](b)at(1,0){$b$};
    \end{tikzpicture}
    \right\}.
\end{equation}
This operad presents a clear interest since its two generators can be considered as minimal
elements in the sense that a partial composition with the two isolated vertices adds exactly
one vertex and no edges, while a partial composition with the loop adds exactly one edge and
no vertex.  A natural question to ask at this point concerns the description of the
multigraphs generated by these two minimal elements.

\begin{proposition}
The following properties hold
\begin{itemize}
\item the operad $\SP$ is a suboperad of $\LP$;
\item the operad $\LP$ is a strict suboperad of $\K \MG$. In particular, the multigraph
\begin{equation}
    \begin{tikzpicture}[Centering,scale=.8]
        \node[NodeGraph](a)at(0,0){$a$};
        \node[NodeGraph](b)at(1,0){$b$};
        \node[NodeGraph](c)at(2,0){$c$};
        \draw[EdgeGraph](a)--(b);
        \draw[EdgeGraph](b)edge[bend left=40](c);
        \draw[EdgeGraph](b)edge[bend right=40](c);
    \end{tikzpicture}
\end{equation}
is in $\K \MG$ but is not in $\LP$.
\end{itemize}
\end{proposition}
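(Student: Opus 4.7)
The plan is to handle the two claims separately. For the first, showing $\SP \subseteq \LP$, I would verify that both generators of $\SP$ lie in $\LP$. The isolated pair $\Points{a}{b}$ is literally a generator of $\LP$. For the edge $\Segment{a}{b}$, I would compute the insertion of the loop $L$ into the pair: the loop's two loose ends independently connect to $a$ or $b$, producing
\[
L_\ast \circ_\ast \Points{a}{b} \enspace = \enspace (\text{loop at }a,\, b\text{ isolated}) \enspace + \enspace (\text{loop at }b,\, a\text{ isolated}) \enspace + \enspace 2 \, \Segment{a}{b}.
\]
Then I would observe that each loop-plus-isolated-vertex multigraph can itself be obtained as $\Points{\ast}{v'} \circ_\ast L_v$ in $\LP[\{a,b\}]$, so by subtraction and division by $2$ the edge $\Segment{a}{b}$ lies in $\LP$. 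This settles the first bullet.

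For the strict inclusion $\LP \subsetneq \K\MG$, I would first note that insertion preserves the total number of edges, so $\LP$ is naturally graded by edge count: the degree-$k$ part $\LP_k$ is spanned by compositions using exactly $k$ copies of $L$ together with any number of copies of $P$. Since composing with $P$ increases arity by $1$ while composing with $L$ preserves arity, the arity of any element built from $k$ copies of $L$ and $m$ copies of $P$ equals $m+1$. The target multigraph (edge $\{a,b\}$ plus double edge $\{b,c\}$) has $3$ vertices and $3$ edges, so if it lay in $\LP$ it would have to lie in the span of compositions of $3$ $L$'s with $2$ $P$'s inside the finite-dimensional space $\K\MG_3[\{a,b,c\}]$.

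The key structural remark for the obstruction is that the two half-edges of the loop in an $L$-generator stay paired under insertion: each edge of any multigraph appearing in $\LP$ arises from exactly one original copy of $L$, whose two half-edges become the two endpoints of the resulting edge (possibly coinciding if they are sent to the same vertex). Given this bookkeeping, expressing the target as a $\K$-linear combination of such compositions forces a bijection between the three copies of $L$ and the three edges of the target. I would then enumerate the composition trees (two copies of $P$ shaping the vertex structure, three copies of $L$ producing the edges) and reduce the question to a finite linear algebra problem. I expect the main obstacle to be exhibiting a suitable $\Sigma_3$-equivariant linear functional on $\K\MG_3[\{a,b,c\}]$, or equivalently carrying out the explicit span computation, that shows the target is not in the $\K$-span of these compositions, because every such composition produces the target only in combination with other multigraphs (involving loops or edges at other vertex pairs) that cannot be simultaneously cancelled.
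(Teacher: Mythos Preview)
The paper is an extended abstract and gives no proof of this proposition (all proofs are deferred to the long version), so there is nothing to compare your argument against directly. I will therefore assess the proposal on its own terms.

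Your treatment of the first bullet is correct and complete: the computation
\[
L_{\ast}\circ_{\ast}\Points{a}{b}
= (\text{loop at }a)\sqcup b \; + \; (\text{loop at }b)\sqcup a \; + \; 2\,\Segment{a}{b}
\]
is right, and the auxiliary terms are indeed in $\LP$ via $\Points{\ast}{b}\circ_{\ast}L_a$ and its symmetric companion, so $\Segment{a}{b}\in\LP$ in characteristic~$0$.

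For the second bullet your setup is sound --- the observation that insertion preserves the total number of edges gives an edge-grading on $\LP$, and the arity/edge bookkeeping correctly reduces the question to whether the target lies in the span of evaluations of composition trees built from exactly three copies of $L$ and two copies of $P$ inside the finite-dimensional space of $3$-edge multigraphs on $\{a,b,c\}$. However, the argument stops there: you write ``I would then enumerate the composition trees \ldots\ and reduce the question to a finite linear algebra problem'' and ``I expect the main obstacle to be exhibiting a suitable \ldots\ linear functional.'' That is a description of a plan, not a proof. The sentence about a forced bijection between the three $L$'s and the three edges of the target is also not an argument as stated: in any single monomial this bijection exists, but in a $\K$-linear combination different monomials may realise the edges via different $L$'s, so no global bijection is forced and no contradiction follows from that remark alone. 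To complete the proof you must actually carry out the span computation (or produce the promised linear functional vanishing on all such compositions but not on the target); until that is done, the strictness claim remains unproved.
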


\section*{Concluding remarks}
We defined in this extended abstract a notion of graph insertion operad. In the complete
version~\cite{AGKT19} of this paper, we give an even more general definition of graph
insertion operads which also naturally extends to hypergraphs.

There are two main questions, with reciprocical goals, raised by this paper: the description
of the multigraphs generated contained in $\LP$ and the description of the generators of the
various operads defined here (as $\K\G_{orc}^{\bullet}$, $\K\G_c$, $\K\T$, {\em etc.}).
Another perspective for future work is to study appropriate examples of algebras on $\SP$
and~$\SP^!$.

\bibliographystyle{plain}
\bibliography{FPSAC}

\end{document}